\newtheorem{theorem}{Theorem}[section]
\newtheorem{lemma}[theorem]{Lemma}
\newtheorem{proposition}[theorem]{Proposition}
\newtheorem{remark}[theorem]{Remark}
\newtheorem{definition}{Definition}[section]
\newcommand{\margnote}[1]{\ifthenelse{\boolean{shownotes}}
	{\marginpar{\raggedright\tiny\texttt{#1}}}{}}
\author{
	Andrea ASPRI\footnote{Johann Radon Insitute for Computational and Applied Mathematics (RICAM), Altenbergerstraße 69, 4040 Linz, Austria, \texttt{andrea.aspri@ricam.oeaw.ac.at}}\ ,  
	Elena BERETTA\footnote{Dipartimento di Matematica,
		Politecnico di Milano, Via Edoardo Bonardi 9 - 20133 Milano (Italy), \texttt{elena.beretta@polimi.it}}\ , 
	Edi ROSSET\footnote{Dipartimento di Matematica e Geoscienze, 	Universit\`a degli Studi di Trieste, via Valerio
			12/1 - 34127 Trieste (ITALY), \texttt{rossedi@units.it}}}
\title{On an elastic model arising from volcanology: an analysis of the direct and inverse problem}
\date{}
\begin{document}
\baselineskip=13pt
\maketitle
\begin{abstract}
In this paper we investigate a mathematical model arising from volcanology describing surface deformation effects generated by a magma chamber embedded into Earth’s interior and exerting on it a uniform hydrostatic pressure. The modeling assumptions translate mathematically into a Neumann boundary value problem for the classical Lam\'e system in a half-space with an embedded pressurized cavity. 
We establish well-posedness of the problem in suitable weighted Sobolev spaces and analyse the inverse problem of determining the pressurized cavity from partial measurements of the displacement field proving uniqueness and stability estimates.
\end{abstract}
\vskip.15cm

{\sf Keywords.} Lam\'e system; pressurized cavity; Neumann problem; half-space; weighted Sobolev spaces; well-posedness; inverse problem; uniqueness; stability estimates.
\vskip.15cm

{\sf 2010 AMS subject classifications.}
35R30  (35J57, 74B05, 86A60) \ \\

\medskip
\noindent
\emph{How to cite this paper:
	This paper has been accepted in
	J. Differential Equations, 265 (2018), 6400–-6423,
	and the final publication is available at\\
	https://doi.org/10.1016/j.jde.2018.07.031.}
\section{Introduction}
In this paper we investigate a linear elastic model describing surface deformations in a volcanic area induced by a magma chamber embedded in Earth's crust. 
From the mathematical point of view we introduce a simplified version of the model assuming the crust to be a half-space in a homogeneous and isotropic medium and the magma chamber {to be} a cavity subjected to a constant pressure on its boundary (for more details see, for example, \cite{Aspri-Beretta-Mascia,BattHill09,Lisowski06,Segall10}). 
More precisely, given $\mathbb{C}$ a fourth-order isotropic and homogeneous elastic tensor, with Lam\'e parameters $\lambda$ and $\mu$, denoting by $\bm{u}$ the displacement vector and by $\mathbb{R}^3_-$ the half-space, we end up with the following linear elastostatic boundary value problem
	\begin{equation}\label{direct problem intro}
		\begin{cases}
		\textrm{div}(\mathbb{C}\widehat{\nabla}\bm{u})=\bm{0} & \textrm{in}\, \mathbb{R}^3_-\setminus \overline{C}\\
		(\mathbb{C}\widehat{\nabla}\bm{u})\bm{n}=p\bm{n} & \textrm{on}\, \partial C\\
		(\mathbb{C}\widehat{\nabla}\bm{u})\bm{e}_3=\bm{0} & \textrm{on}\, \mathbb{R}^2\\
		\end{cases}
	\end{equation}
where $\widehat{\nabla}{\bm{u}}$ is the strain tensor, $C$ is the cavity, $p>0$ represents the pressure acting on the boundary of the cavity, $\bm{n}$ is the outer unit normal vector on $\partial C$ and $\bm{e}_3=(0,0,1)$.
 
The main purpose of this paper is to derive quantitative stability estimates for the inverse problem of identifying the pressurized cavity $C$ from one measurement of the displacement provided on a portion of the boundary of the half-space. 

In order to address this issue, we first analyse the well-posedness of \eqref{direct problem intro} under the assumption that $\partial C$ is Lipschitz.
We highlight that for the well-posedness we can either impose explicitly some decay conditions at infinity for $\bm{u}$ and $\nabla\bm{u}$ (see, for example, \cite{Aspri-Beretta-Mascia}) or, more suitably for our purposes, set the analysis in some weighted Sobolev spaces where the decay conditions are expressed by means of weights.
In particular, we will show the well-posedness in this weighted Sobolev space
	\begin{equation}\label{weighted sobolev space intro}
		{H^{1}_w(\mathbb{R}^3_-\setminus \overline{C})}=\Big\{\bm{u}\in \mathcal{D}'(\mathbb{R}^3_-\setminus \overline{C}), \frac{\bm{u}}{(1+|\bm{x}|^2)^{1/2}}\in L^2(\mathbb{R}^3_-\setminus \overline{C}), \nabla\bm{u}\in L^2(\mathbb{R}^3_-\setminus \overline{C})\Big\},
	\end{equation}
where $\mathcal{D}'(\mathbb{R}^3_-\setminus \overline{C})$ is the space of distributions in $\mathbb{R}^3_-\setminus \overline{C}$, with the norm given by
	\begin{equation}
		\|\bm{u}\|^2_{H^{1}_w(\mathbb{R}^3_-\setminus \overline{C})}=\biggl(\|(1+|\bm{x}|^2)^{-1/2}\bm{u}\|^2_{L^2(\mathbb{R}^3_-\setminus \overline{C})}+\|\nabla \bm{u}\|^2_{L^2(\mathbb{R}^3_-\setminus \overline{C})}\biggr).
	\end{equation} 	 
Even if the analysis of the well-posedness of general elastic problems in the half-space via weighted Sobolev spaces is known, see \cite{Amrouche-Dambrine-Raudin}, we would like to emphasize that in our framework we have two principal difficulties and novelties to treat: the first one concerns the fact that the problem is stated in an unbounded domain with unbounded boundary and with non homogeneous Neumann boundary conditions on the cavity. 
The second one is related to the derivation of quantitative stability estimates of the solution in $H^1_w(\mathbb{R}^3_-\setminus \overline{C})$. 
To this end, we need to derive a quantitative weighted Poincar\'e inequality and a Korn-type inequality in $\mathbb{R}^3_-\setminus \overline{C}$.
As far as we know, these inequalities are known in a quantitative way only for bounded domains, see \cite{Alessandrini-Morassi-Rosset}, and for conical domains, see \cite{Kondracev-Oleinik}. 

Therefore, the first part of this paper is devoted to prove quantitative Poincar\'e and Korn inequalities in $\mathbb{R}^3_-\setminus \overline{C}$ using some a priori information on the cavity $C$. 
This allows us to derive the following estimate for the unique solution $\bm{u}$ of problem \eqref{direct problem intro} 
	\begin{equation*}
		\|\bm{u}\|_{H^{1}_w(\mathbb{R}^3_-\setminus \overline{C})}\leq c p, 
	\end{equation*}
where the constant $c$ depends on the Lam\'e parameters, on the Lipschitz character of $\partial C$ and on the distance of $C$ from the boundary of the half-space. 
This estimate is fundamental for the direct problem and it is also necessary to establish stability estimates for the cavity in terms of the measurements. 

To prove the stability result for the inverse problem we need stronger regularity on the cavity. We follow and adapt when needed the results contained in \cite{Morassi-Rosset,Morassi-Rosset1}. 
From the point of view of the rate of convergence it is well known that, despite of smoothness assumptions on the cavity, only a weak rate of logaritmic type is expected. In our case we are able to prove a log-log type estimate and not the optimal logaritmic one proved for the scalar case (see \cite{Alessandrini-Beretta-Rosset-Vessella}) due to the lack of a doubling inequality at the boundary for the solutions of the Lam\'e system. 

The paper is organized as follows. 
In Section \ref{sec: notation and definitions} we give the notation used in the rest of the paper and definition on the regularity of the domains.
In Section \ref{sec: the direct problem - well-posedness and regularity result}, we set the analysis of the elastic problem in the weighted Sobolev space, proving first the constructive Poincar\'e and Korn inequalities and then giving the result of the well-posedness. Section \ref{sec: the inverse problem - uniqueness and stability estimate} is devoted to the analysis of the inverse problem and the derivation of the uniqueness and stability results.

% % % % % % % % % % % % % % % % % % % % % % % % % % % % % % % % % % % % % % % % % % % % % % % % % % % % % % % % % % % % % % % % % % % % % % % %
\section{On Some Notation and Useful Definitions}\label{sec: notation and definitions}
In this section we set up notation and some definitions paying specific attention to the regularity of bounded domains. 

In the sequel, we denote scalar quantities in italic type, e.g. $\lambda, \mu, \nu$,
points and vectors in bold italic type, e.g.  $\bm{x}, \bm{y}, \bm{z}$ and $\bm{u}, \bm{v}, \bm{w}$, matrices and second-order tensors in bold type, e.g.  $\mathbf{A}, \mathbf{B}, \mathbf{C}$, and fourth-order tensors in blackboard bold type, e.g.  $\mathbb{A}, \mathbb{B}, \mathbb{C}$. 

The transpose of a second-order tensor $\mathbf{A}$ is denoted by $\mathbf{A}^T$
and its symmetric part by 
	\begin{equation*}
		\widehat{\mathbf{A}}=\tfrac{1}{2}\left(\mathbf{A}+\mathbf{A}^T\right).
	\end{equation*}
To indicate the inner product between two vectors $\bm{u}$ and $\bm{v}$ we use
$\bm{u}\cdot \bm{v}=\sum_{i} u_{i} v_{i}$
whereas for second-order tensors $\mathbf{A}:\mathbf{B}=\sum_{i,j}a_{ij} b_{ij}$. 
The tensor product of two vectors $\bm{u}$ and $\bm{v}$ is denoted by $\bm{u}\otimes \bm{v}=u_i v_j$. 
Similarly, $\mathbf{A}\otimes \mathbf{B}=A_{ij}B_{hk}$ represents the tensor product between matrices. 
With $|\mathbf{A}|$ we mean the norm induced by the inner product between second-order tensors, that is 
	\begin{equation*}
		|\mathbf{A}|=\sqrt{\mathbf{A}:\mathbf{A}}.
	\end{equation*}
We denote the open half-space 
	\begin{equation*}
		\{\bm{x}=(x_1,x_2,x_3)\in \mathbb{R}^3\,:\,x_3< 0\}=\mathbb{R}^3_-
	\end{equation*}
and we represent with $\mathbb{R}^2$ its boundary, that is the set $\{\bm{x}=(x_1,x_2,x_3)\in \mathbb{R}^3\,:\,x_3=0\}$. 
The set $B^-_r(\bm{0})$ denotes the half ball of centre $\bm{0}$ and radius $r$, that is 
	\begin{equation*}
		B^-_r(\bm{0})=\{\bm{x}\in\mathbb{R}^3\, :\, x^2_1+x^2_2+x^2_3<r^2, x_3<0\}. 
	\end{equation*}
With $B'_{r}(\bm{0})$ we mean the circle of centre $\bm{0}$ and radius $r$, namely 	
	\begin{equation*}
		B'_{r}(\bm{0})=\{\bm{x}\in\mathbb{R}^2\, :\, x^2_1+x^2_2<r^2\}.
	\end{equation*}
We denote with $d(A,B)$ the distance between the two sets $A$ and $B$, that is
	\begin{equation*}
		d(A,B):=\inf\{|\bm{x}-\bm{y}|\, :\, \bm{x}\in A, \bm{y}\in B\}
	\end{equation*}
and with $d_{\mathcal{H}}(A,B)$ their Hausdorff distance, namely
	\begin{equation*}
		d_{\mathcal{H}}(A,B):=\max\{\sup\limits_{\bm{x}\in A}\inf\limits_{\bm{y}\in B}|\bm{x}-\bm{y}|,\sup\limits_{\bm{y}\in B}\inf\limits_{\bm{x}\in A}|\bm{x}-\bm{y}|\}.
	\end{equation*}	 
The unit outer normal vector at the boundary of a regular domain is represented by $\bm{n}$.

\subsection{Domain Regularity}
In the following sections, the costants appearing in the inequalities will depend on some a priori information of the constitutive parameters of the linear elastic model and on the a priori geometric and regularity assumptions on the cavity. For this reason it is important to recall the definition of $C^{k,\alpha}$ regularity for a bounded domain.   
\begin{definition}[$C^{k,\alpha}$ regularity]\ \\
Let $\Omega$ be a bounded domain in $\mathbb{R}^3$. Given $k, \alpha$, with $k\in\mathbb{N}$ and $0<\alpha\leq 1$, we say that a portion $S$ of $\partial \Omega$ is of class $C^{k,\alpha}$ with constant $r_0$, $E_0$, if for any $\bm{P}\in S$, there exists a rigid transformation of coordinates under which we have $\bm{P}=\bm{0}$ and 
	\begin{equation*}
		\Omega\cap B_{r_0}(\bm{0})=\{\bm{x}\in B_{r_0}(\bm{0})\, :\, x_3>\psi(\bm{x}')\},
	\end{equation*}
where ${\psi}$ is a $C^{k,\alpha}$ function on $B'_{r_0}(\bm{0})\subset \mathbb{R}^2$ such that
	\begin{equation*}
		\begin{aligned}
			{\psi}(\bm{0})&=0,\\
			\nabla{\psi}(\bm{0})&=\mathbf{0}, \qquad \text{for}\, k\geq 1\\
			\|{\psi}\|_{C^{k,\alpha}(B_{r_0}(\bm{0}))}&\leq E_0.
		\end{aligned}
	\end{equation*}
When $k=0, \alpha=1$, we also say that $S$ is of Lipschitz class with constants $r_0$, $E_0$.
\end{definition}

\section{The Direct Problem}\label{sec: the direct problem - well-posedness and regularity result}
In this section, we will analyse the well-posedness of the following linear elastostatic boundary value problem 
	\begin{equation}\label{direct pb}
		\begin{cases}
		\textrm{div}(\mathbb{C}\widehat{\nabla}\bm{u})=\bm{0} & \textrm{in}\, \mathbb{R}^3_-\setminus \overline{C}\\
		(\mathbb{C}\widehat{\nabla}\bm{u})\bm{n}=p\bm{n} & \textrm{on}\, \partial C\\
		(\mathbb{C}\widehat{\nabla}\bm{u})\bm{e}_3=\bm{0} & \textrm{on}\, \mathbb{R}^2\\
		\end{cases}
	\end{equation}
where $p>0$ represents the pressure, $C$ is the cavity,  $\bm{n}$ is the outer unit normal vector on $\partial C$ and $\bm{e}_3=(0,0,1)$.  $\mathbb{C}$ is the fourth-order isotropic and homogeneous elastic tensor given by 
	\begin{equation*}
		\mathbb{C}:=\lambda \mathbf{I}\otimes \mathbf{I}+2\mu \mathbb{I},
	\end{equation*}		
where $\lambda$ and $\mu$ are the two constants Lam\'e parameters, $\mathbf{I}$ is the identity matrix in $\mathbb{R}^3$ and $\mathbb{I}$ is the fourth-order identity tensor such that $\mathbb{I}\mathbf{A}=\widehat{\mathbf{A}}$, for any second-order tensor $\mathbf{A}$.

We first provide some physical information on the cavity $C$ and the elastic tensor $\mathbb{C}$.  
\subsection{Main assumptions and a priori information}\label{sec: some a priori information}
For the study of the direct problem \eqref{direct pb}, we assume that the constant Lam\'e parameters satisfy the inequalities 
	\begin{equation} \label{bounds_lame}
		3\lambda+2\mu>0\quad \textrm{and}\quad \mu>0,
	\end{equation}
that is the tensor $\mathbb{C}$ is strongly convex: 
	\begin{equation}\label{pos_def}
		\mathbb{C}\widehat{\mathbf{A}}: \widehat{\mathbf{A}} \geq \xi_0 |\widehat{\mathbf{A}}|^2,
	\end{equation}
with $\xi_0 = \min\{2\mu, 2\mu+3\lambda\}$, see \cite{Gurtin}.	

{The cavity $C$ is supposed to be a bounded domain with Lipschitz regularity}, that is
	\begin{align}
		\partial C\, \textrm{is Lipschitz with constants}\, r_0\, \textrm{and}\, E_0.\label{regularity of C}
	\end{align}
Additionally, we impose some a priori information on the size of the cavity and its distance from the boundary of the half-space. 
In particular, denoting with $\text{diam}(A)$ the diameter of a set $A$, we require
	\begin{align}
		 B^-_{2D_0}(\bm{0})&\supset C\label{a priori information on C},\\
		d(C, \mathbb{R}^2)&\geq D_0 \label{first a priori information},\\
		\text{diam}(C)&< D_0,
		%\qquad \text{with}\ d_0<{D_0},
		\label{second a priori information}
	\end{align}
where, without loss of generality, we can assume that the constant $D_0>1$.

\begin{remark}
From here on, for simplicity of reading, we omit the dependence of some constants on the Lam\'e coefficients $\lambda$ and $\mu$, on the parameters $r_0, E_0, D_0$ related to the a priori information on the cavity $C$ and on $s_0$ {which represents the radius of the circle where the measurements are collected. For its definition see Section 4.}
\end{remark}

We highlight that, since we are in the half-space, namely an unbounded domain with unbounded boundary, the study of the well-posedness of the direct problem \eqref{direct pb} can be done either imposing some decay conditions at infinity for the function $\bm{u}$ and $\nabla \bm{u}$ (see for example \cite{Aspri-Beretta-Mascia}) or setting the analysis in a suitable weighted Sobolev space.
We choose this second strategy. 
So we recall the definition of the weighted Sobolev space for domains of type $\mathbb{R}^3_-\setminus \overline{C}$. To do that, we denote the space of the indefinitely differentiable functions with compact support in ${\mathbb{R}}^3_-\setminus \overline{C}$ by $\mathcal{D}({\mathbb{R}}^3_-\setminus \overline{C})$ and with $\mathcal{D}'(\mathbb{R}^3_-\setminus \overline{C})$ its dual space, that is the space of distributions.
\begin{definition}[Weighted Sobolev space]
Given the function
	\begin{equation}\label{weight}
		\rho=(1+|\bm{x}|^2)^{1/2},
	\end{equation}
we define 
	\begin{equation}\label{w. Sobolev space}
		{H^{1}_w(\mathbb{R}^3_-\setminus \overline{C})}=\Big\{\bm{u}\in \mathcal{D}'(\mathbb{R}^3_-\setminus \overline{C}), \frac{\bm{u}}{\rho}\in L^2(\mathbb{R}^3_-\setminus \overline{C}), \nabla\bm{u}\in L^2(\mathbb{R}^3_-\setminus \overline{C})\Big\}.
	\end{equation}
\end{definition}
This weighted Sobolev space is a reflexive Banach space, see for example \cite{Amrouche-Bonzom} and references therein, equipped with its natural norm
	\begin{equation}\label{def weighted sobolev space}
		\|\bm{u}\|^2_{H^{1}_w(\mathbb{R}^3_-\setminus \overline{C})}=\biggl(\|\rho^{-1}\bm{u}\|^2_{L^2(\mathbb{R}^3_-\setminus \overline{C})}+\|\nabla \bm{u}\|^2_{L^2(\mathbb{R}^3_-\setminus \overline{C})}\biggr).
	\end{equation} 	
We recall that the weight is chosen so that the space $\mathcal{D}(\overline{\mathbb{R}}^3_-\setminus {C})$ is dense in $H^{1}_w(\mathbb{R}^3_-\setminus \overline{C})$, see \cite{Amrouche-Bonzom,Hanouzet}. 
When we deal with bounded domains $D\subset(\mathbb{R}^3_-\setminus \overline{C})$ we emphasize that $H^1_w(D)$ reduces to $H^1(D)$ regularity, hence the usual trace theorems hold.
For generalizations and more details on weighted Sobolev spaces see, for example, \cite{Amrouche-Bonzom,Amrouche-Dambrine-Raudin,Hanouzet} and references therein.

The study of the well-posedness of \eqref{direct pb} is therefore accomplished using Lax-Milgram theorem in $H^1_w(\mathbb{R}^3_-\setminus \overline{C})$ space.
We stress that the use of the weighted Sobolev space is the natural approach to obtain a quantitative $H^1_w(\mathbb{R}^3_-\setminus \overline{C})$ estimate in terms of the boundary data. To this end we need first to have constructive Poincar\'e and Korn-type inequalities. 

\subsection{Weighted Poincar\'e Inequality and Korn-type Inequality} 
In this section we want to prove a weighted Poincar\'e inequality and a Korn-type inequality in $\mathbb{R}^3_-\setminus \overline{C}$ using a suitable partition of unity.
  
Let us consider two {half balls} $B^-_r(\bm{0})$ and $B^-_R(\bm{0})$, with $r<R$, such that 
	\begin{equation*}
		C\subset B^-_r(\bm{0})\subset B^-_R(\bm{0}).
	\end{equation*}
Using the a priori information \eqref{first a priori information} and \eqref{second a priori information} on the cavity $C$, we fix 
	\begin{equation*}
		r=3 D_0\qquad \textrm{and}\qquad R=4 D_0.  
	\end{equation*}
We consider a specific partition of unity of $\mathbb{R}^3_-$. In particular, we take $\varphi_1, \varphi_2 \in C^{\infty}(\mathbb{R}^3_-)$ such that 
	\begin{equation}\label{property of varphi}
		0\leq \varphi_1,\varphi_2\leq 1\quad \textrm{and}\quad \varphi_1+\varphi_2=1\quad \textrm{in}\, \mathbb{R}^3_-,
	\end{equation}
with
	\begin{align}
			\varphi_2&=0, \quad \varphi_1=1,\quad &\textrm{in}\, {B^-_r(\bm{0})},\  \label{supp phi2}\\ %(|\bm{x}|\leq r)\cap {\mathbb{R}}^3_-
			\varphi_1&=0, \quad \varphi_2=1,\quad &\textrm{in}\, \{|\bm{x}|\geq R\}\cap {{\mathbb{R}}}^3_-, \ \label{supp phi1}\\
			|\nabla\varphi_1|&\leq \frac{c}{\rho},\quad |\nabla\varphi_2|\leq \frac{c}{\rho},\quad &\textrm{in}\, B^-_R(\bm{0})\setminus B^-_r(\bm{0}),\label{behaviour grad phi} 
	\end{align}
where $c$ is an absolute positive constant thanks to the choice made for $r$ and $R$. 

In the following it is useful to split $\partial B^-_R(\bm{0})=\partial B^h_R(\bm{0})\cup \partial B^b_R(\bm{0})$, where $\partial B^h_R(\bm{0})$ is the circle $B'_R(\bm{0})$ whereas $\partial B^b_R(\bm{0})$ is the spherical cap. We first prove Poincar\'e inequality
\begin{theorem}[Weighted Poincar\'e Inequality]\label{weighted poincaré inequality}
For any function $\bm{u}\in H^1_w(\mathbb{R}^3_-\setminus \overline{C})$ there exists a positive constant $c$, with $c=c(r_0,E_0,D_0)$, such that
	\begin{equation}\label{WPI}
		\int\limits_{\mathbb{R}^3_-\setminus \overline{C}}\bigg|\frac{\bm{u}}{\rho}\bigg|^2\, d\bm{x}\leq c \int\limits_{\mathbb{R}^3_-\setminus \overline{C}}|\nabla\bm{u}|^2\, d\bm{x},
	\end{equation}
where $\rho$ is defined in \eqref{weight}.	
\end{theorem}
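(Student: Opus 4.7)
The plan is to first reduce to smooth compactly supported test functions by density, then split $\bm{u} = \varphi_1 \bm{u} + \varphi_2 \bm{u}$ via the partition of unity just constructed and estimate each piece with an inequality adapted to its support: a classical Poincaré bound on the bounded inner region $B^-_R(\bm{0}) \setminus C$, and a Hardy bound on the unbounded exterior $\overline{\mathbb{R}}^3_- \setminus B^-_r(\bm{0})$. The density of $\mathcal{D}(\overline{\mathbb{R}}^3_- \setminus C)$ in $H^1_w(\mathbb{R}^3_- \setminus C)$ recalled immediately after \eqref{def weighted sobolev space} makes this reduction legitimate.

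For the inner piece $\varphi_1 \bm{u}$, which by \eqref{supp phi1} is supported in the bounded Lipschitz domain $B^-_R(\bm{0}) \setminus C$ and vanishes on the spherical cap $\partial B^b_R(\bm{0})$, I would invoke the classical Poincaré inequality for functions vanishing on a portion of the boundary of a bounded Lipschitz domain, obtaining $\|\varphi_1 \bm{u}\|_{L^2(B^-_R(\bm{0}) \setminus C)} \leq C \|\nabla(\varphi_1 \bm{u})\|_{L^2}$ with a constant depending on $r_0, E_0, D_0$ through the Lipschitz character of $\partial C$ and the geometry of $B^-_R \setminus C$. Since $\rho$ is bounded on the support, this upgrades to $\int |\varphi_1 \bm{u}/\rho|^2 \leq C \int |\nabla(\varphi_1 \bm{u})|^2$.

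For the outer piece $\varphi_2 \bm{u}$, which by \eqref{supp phi2} is supported in $\overline{\mathbb{R}}^3_- \setminus B^-_r(\bm{0})$ and vanishes on the hemisphere $\partial B^b_r(\bm{0})$, I would apply the classical Hardy inequality in $\mathbb{R}^3$. Extending by zero inside $B^-_r(\bm{0})$ and then by even reflection across $\{x_3=0\}$ produces a compactly supported $H^1$ function on $\mathbb{R}^3$, for which $\int |v|^2/|\bm{x}|^2 \leq 4 \int |\nabla v|^2$ is available. Since $r = 3D_0 > 1$, on the support one has $1/\rho^2 \leq 1/|\bm{x}|^2$, hence $\int |\varphi_2 \bm{u}/\rho|^2 \leq C \int |\nabla(\varphi_2 \bm{u})|^2$.

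Summing the two bounds and expanding $|\nabla(\varphi_j \bm{u})|^2 \leq 2|\nabla \bm{u}|^2 + 2|\bm{u}|^2 |\nabla \varphi_j|^2$ using $|\nabla \varphi_j| \leq c/\rho$ from \eqref{behaviour grad phi}, I would arrive at
\begin{equation*}
\int_{\mathbb{R}^3_- \setminus C} \left|\frac{\bm{u}}{\rho}\right|^2 \leq C \int_{\mathbb{R}^3_- \setminus C} |\nabla \bm{u}|^2 + C \int_{B^-_R(\bm{0}) \setminus B^-_r(\bm{0})} \left|\frac{\bm{u}}{\rho}\right|^2.
\end{equation*}
The main obstacle is the residual annular term, which has the same integrand as the left-hand side and so cannot be absorbed by mere subset inclusion. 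I would handle it by a Poincaré--Wirtinger inequality on the fixed bounded annulus, writing $\bm{u} = (\bm{u} - \bar{\bm{u}}) + \bar{\bm{u}}$: the oscillatory part is controlled by $\|\nabla \bm{u}\|_{L^2}$, while the mean $\bar{\bm{u}}$ is bounded by reinvoking the Hardy estimate on the exterior through spherical averages of $\bm{u}$ on lower hemispheres, which tend to zero at infinity by compact support and whose radial derivatives are integrated against $|\nabla \bm{u}|$. Tracking all constants through these steps yields the explicit dependence $c = c(r_0, E_0, D_0)$.
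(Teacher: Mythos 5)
Your proposal is correct and follows the paper's skeleton --- the same partition of unity, the same constructive Poincar\'e inequality for functions vanishing on the spherical cap for the inner piece, and a Hardy-type bound for the outer piece --- but it diverges in the one step that actually carries the weight of the argument. The paper invokes the Hardy inequality of Kondrat'ev and Oleinik \cite{Kondracev-Oleinik} for the exterior of a hemisphere in the half-space, applied to $\bm{u}$ itself rather than to $\varphi_2\bm{u}$: that inequality requires no vanishing on the hemisphere $\{|\bm{x}|=r\}$, so the single estimate
\begin{equation*}
\bigg\|\frac{\bm{u}}{\rho}\bigg\|^2_{L^2(\{|\bm{x}|>r\}\,\cap\,\mathbb{R}^3_-)}\leq c\,\|\nabla\bm{u}\|^2_{L^2(\{|\bm{x}|>r\}\,\cap\,\mathbb{R}^3_-)}
\end{equation*}
disposes simultaneously of the outer term $\mathcal{N}_2$ and of the residual annular term produced by $|\nabla\varphi_j|\leq c/\rho$. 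You instead apply the classical whole-space Hardy inequality to $\varphi_2\bm{u}$ (legitimate, since the cut-off makes the zero extension and even reflection an $H^1(\mathbb{R}^3)$ function), and this is exactly why the annular term survives and must be treated separately. Your fix --- Poincar\'e--Wirtinger on the half-annulus plus control of the mean through spherical averages of $\bm{u}$ on lower hemispheres, whose radial derivatives are dominated by averages of $|\nabla\bm{u}|$ and summable by Cauchy--Schwarz in three dimensions --- is sound; note, however, that it is in substance a self-contained proof of the boundary-condition-free exterior Hardy inequality, so your route replaces the citation of the Kondrat'ev--Oleinik lemma by an elementary (if longer) computation, while the paper's choice of lemma makes the annular term disappear in one line. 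Both arguments yield the claimed dependence $c=c(r_0,E_0,D_0)$.
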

\begin{proof}
From \eqref{property of varphi} we find that
	\begin{equation*}
		\bigg\| \frac{\bm{u}}{\rho}\bigg\|^2_{L^2(\mathbb{R}^3_-\setminus \overline{C})}\leq 2\bigg(\bigg\| \varphi_1\frac{\bm{u}}{\rho}\bigg\|^2_{L^2(\mathbb{R}^3_-\setminus \overline{C})}+\bigg\|\varphi_2 \frac{\bm{u}}{\rho}\bigg\|^2_{L^2(\mathbb{R}^3_-\setminus \overline{C})}\bigg):=2(\mathcal{N}_1+\mathcal{N}_2).
	\end{equation*} 
We study $\mathcal{N}_1$ and $\mathcal{N}_2$. \\
From the property \eqref{supp phi1} and since
$\rho^{-1}\leq 1$, we get
	\begin{equation}\label{mathcal N1}
		\mathcal{N}_1=\bigg\| \varphi_1\frac{\bm{u}}{\rho}\bigg\|^2_{L^2(B^-_R(\bm{0})\setminus \overline{C})}\leq \|\varphi_1{\bm{u}}\|^2_{L^2(B^-_R(\bm{0})\setminus \overline{C})}.
	\end{equation}
Therefore, since $\varphi_1=0$ on $\partial B^b_R(\bm{0})$, we use the {quantitative} Poincar\'e inequality for functions vanishing on a portion of the boundary of a bounded domain, see for instance \cite{Alessandrini-Morassi-Rosset} (Theorem 3.3, in particular Example 3.6), finding
	\begin{equation}\label{equ phi1*u}
		\|\varphi_1 \bm{u}\|^2_{L^2(B^-_R(\bm{0})\setminus \overline{C})}\leq c\, \|\nabla(\varphi_1 \bm{u})\|^2_{L^2(B^-_R(\bm{0})\setminus \overline{C})},
	\end{equation}
where $c$ is a positive constant such that $c=c(r_0,E_0,D_0)$. 
In this way, we obtain
	\begin{equation}\label{grad(phi1u)}
		\|\nabla(\varphi_1 \bm{u})\|^2_{L^2(B^-_R(\bm{0})\setminus \overline{C})}\leq 2 \left(\|\bm{u}\otimes \nabla \varphi_1\|^2_{L^2(B^-_R(\bm{0})\setminus \overline{C})}+\|\varphi_1\nabla \bm{u}\|^2_{L^2(B^-_R(\bm{0})\setminus \overline{C})}\right).
	\end{equation}
Now, from the property \eqref{behaviour grad phi}, we have
	\begin{equation}\label{u otimes grad phi_1}
		\|\bm{u}\otimes \nabla \varphi_1\|^2_{L^2(B^-_R(\bm{0})\setminus \overline{C})}=\int\limits_{B^-_R(\bm{0})\setminus \overline{C}}|\bm{u}|^2|\nabla\varphi_1|^2\, d\bm{x}\leq c \displaystyle\int\limits_{B^-_R(\bm{0})\setminus \overline{B^-_r(\bm{0})}}\frac{|\bm{u}|^2}{\rho^2}\, d\bm{x} .
	\end{equation}
By \eqref{equ phi1*u}, \eqref{grad(phi1u)}, \eqref{u otimes grad phi_1} and recalling 
\eqref{property of varphi}, we have
	\begin{equation}\label{phi1*u}
		\|\varphi_1 \bm{u}\|^2_{L^2(B^-_R(\bm{0})\setminus \overline{C})}\leq c \left(\bigg\|\frac{\bm{u}}{\rho}\bigg\|^2_{L^2(\{|\bm{x}|>r\}\, \cap\, \mathbb{R}^3_-)}+\|\nabla \bm{u}\|^2_{L^2(B^-_R(\bm{0})\setminus \overline{C})}\right).
	\end{equation}
Applying Hardy's inequality for the exterior of a {half ball} in the half-space (see \cite{Kondracev-Oleinik}, Lemma 3, p. 83) to the first term in the right-hand side of the inequality \eqref{phi1*u}, we find
	\begin{equation}\label{hardy inequality}
		\bigg\|\frac{\bm{u}}{\rho}\bigg\|^2_{L^2(\{|\bm{x}|>r\}\, \cap\,  \mathbb{R}^3_-)}\leq c\, \|\nabla{\bm{u}}\|^2_{L^2(\{|\bm{x}|>r\}\, \cap\, \mathbb{R}^3_-)}.
	\end{equation}
Inserting \eqref{hardy inequality} in \eqref{phi1*u} and then going back to \eqref{mathcal N1}, we have
	\begin{equation}\label{inequ N_1}
		\begin{aligned}
		\mathcal{N}_1=\bigg\| \varphi_1\frac{\bm{u}}{\rho}\bigg\|^2_{L^2(B^-_R(\bm{0})\setminus \overline{C})}&\leq c\left(\|\nabla \bm{u}\|^2_{L^2(B^-_R(\bm{0})\setminus \overline{C})}+\|\nabla{\bm{u}}\|^2_{L^2(\{|\bm{x}|>r\}\,\cap\, \mathbb{R}^3_-)}\right)\\
		&\leq c\,\|\nabla \bm{u}\|^2_{L^2(\mathbb{R}^3_-\setminus \overline{C})}.
		\end{aligned}
	\end{equation}
Analogously, using the properties \eqref{property of varphi} and \eqref{supp phi2} and applying again Hardy's inequality (see \cite{Kondracev-Oleinik}, Lemma 3, p. 83), we find 
	\begin{equation}\label{inequ N_2}
		\begin{aligned}
 		\mathcal{N}_2=\bigg\|\varphi_2\frac{\bm{u}}{\rho}\bigg\|^2_{L^2(\mathbb{R}^3_-\setminus \overline{C})}\leq \bigg\|\frac{\bm{u}}{\rho}\bigg\|^2_{L^2(\{|\bm{x}|>r\}\, \cap\, \mathbb{R}^3_-)}&\leq c \|\nabla{\bm{u}}\|^2_{L^2(\{|\bm{x}|>r\}\, \cap\, \mathbb{R}^3_-)}\\
 		&\leq c\,\|\nabla \bm{u}\|^2_{L^2(\mathbb{R}^3_-\setminus \overline{C})}.
 		\end{aligned} 
	\end{equation}
Putting together the inequalities \eqref{inequ N_1} and \eqref{inequ N_2} we have the assertion.
\end{proof}
Before proving a Korn-type inequality in the exterior domain of a half-space, we state, for the reader's convenience, a slight modification of a lemma proved by Kondrat'ev and Oleinik in \cite{Kondracev-Oleinik} (see Lemma 5. p. 85) for the case of a function $\bm{u}\in H^1_w(\mathbb{R}^3_-\setminus \overline{C})$. This lemma will be useful in the proof of the Korn inequality.
\begin{lemma}\label{Lemma 5. Oleinik}
Let $\bm{u}\in H^1_w(\mathbb{R}^3_-\setminus \overline{C})$. For every $r'<r$ there exists a positive constant $c$ such that
	\begin{equation*}
		\|\nabla \bm{u}\|_{L^2(\{|\bm{x}|>r\}\, \cap\, \mathbb{R}^3_-)}\leq c \|\widehat{\nabla} \bm{u}\|_{L^2(\{|\bm{x}|>r'\}\, \cap\, \mathbb{R}^3_-)},
	\end{equation*}
where $c=c(r,r')$.	 
\end{lemma}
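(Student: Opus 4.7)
The plan is to adapt the original Kondrat'ev--Oleinik argument (which treats the exterior of a ball in the full space $\mathbb{R}^n$) to the half-space setting by combining a cutoff, a weighted Korn inequality on $\mathbb{R}^3_-$, and a bounded-domain Korn estimate on the annular region carrying the derivative of the cutoff. Concretely, I would first fix an intermediate radius $r'<r_*<r$ and introduce a smooth radial cutoff $\eta\in C^\infty(\mathbb{R}^3)$ with $\eta\equiv 1$ on $\{|\bm{x}|\geq r\}$, $\eta\equiv 0$ on $\{|\bm{x}|\leq r_*\}$, and $|\nabla\eta|\leq C(r-r_*)^{-1}$. In the geometric regime of interest one has $r'>2D_0$, so $\eta$ vanishes on a neighborhood of $\overline{C}$, and $\bm{v}:=\eta\bm{u}$ extended by zero inside $\{|\bm{x}|<r_*\}$ is a legitimate element of $H^1_w(\mathbb{R}^3_-)$.

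Next I would apply a weighted Korn-type inequality on the whole half-space, namely
$$\|\nabla\bm{v}\|_{L^2(\mathbb{R}^3_-)}\leq c\|\widehat{\nabla}\bm{v}\|_{L^2(\mathbb{R}^3_-)}\qquad\forall\bm{v}\in H^1_w(\mathbb{R}^3_-),$$
which holds without rigid-motion correction because every infinitesimal rigid motion $\bm{a}+\bm{b}\times\bm{x}$ of $\mathbb{R}^3$ has infinite $L^2_\rho(\mathbb{R}^3_-)$ norm and therefore is excluded from $H^1_w(\mathbb{R}^3_-)$ (this type of estimate is available for instance in Amrouche--Dambrine--Raudin). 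Expanding by Leibniz $\nabla\bm{v}=\eta\nabla\bm{u}+\bm{u}\otimes\nabla\eta$ and $\widehat{\nabla}\bm{v}=\eta\widehat{\nabla}\bm{u}+\tfrac{1}{2}(\bm{u}\otimes\nabla\eta+\nabla\eta\otimes\bm{u})$, and using $\eta\equiv 1$ on $\{|\bm{x}|\geq r\}$, I would obtain
$$\|\nabla\bm{u}\|^2_{L^2(\{|\bm{x}|>r\}\cap\mathbb{R}^3_-)}\leq c\|\widehat{\nabla}\bm{u}\|^2_{L^2(\{|\bm{x}|>r_*\}\cap\mathbb{R}^3_-)}+c'\|\bm{u}\|^2_{L^2(A)},$$
where $A=\{r_*<|\bm{x}|<r\}\cap\mathbb{R}^3_-$ is the bounded annular support of $\nabla\eta$, and $c'$ depends on $(r-r_*)$.

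The remaining task, and the one I expect to be the main obstacle, is to bound the annular term $\|\bm{u}\|_{L^2(A)}$ by $\|\widehat{\nabla}\bm{u}\|_{L^2(\{|\bm{x}|>r'\}\cap\mathbb{R}^3_-)}$. On the bounded Lipschitz domain $A$ the classical Korn second inequality yields an infinitesimal rigid motion $\bm{R}$ with $\|\bm{u}-\bm{R}\|_{H^1(A)}\leq c\|\widehat{\nabla}\bm{u}\|_{L^2(A)}$, so the real difficulty is showing that the rigid-motion part $\bm{R}$ itself is controlled by $\|\widehat{\nabla}\bm{u}\|_{L^2(\{|\bm{x}|>r'\})}$. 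The idea would be to exploit the weighted decay $\bm{u}/\rho\in L^2(\mathbb{R}^3_-\setminus C)$: since a non-trivial rigid motion grows linearly at infinity while $\bm{u}/\rho$ is square integrable, a rigid motion that closely matches $\bm{u}$ on a single annulus cannot have arbitrarily large parameters without contradicting the integrability condition further out. Making this quantitative requires a patching argument across a dyadic chain of annuli $A_k=\{r'2^k<|\bm{x}|<r'2^{k+1}\}\cap\mathbb{R}^3_-$: on each $A_k$ one extracts a best rigid motion approximation $\bm{R}_k$, one controls the difference $\bm{R}_{k+1}-\bm{R}_k$ on the overlap by a scaling-invariant Korn estimate together with $\|\widehat{\nabla}\bm{u}\|_{L^2(A_k\cup A_{k+1})}$, and one uses the summability of $\|\bm{u}/\rho\|^2_{L^2(A_k)}$ to prevent the telescoping of the $\bm{R}_k$'s from blowing up.

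I expect the cutoff reduction and the weighted Korn inequality on $\mathbb{R}^3_-$ to be essentially bookkeeping, so the genuine technical heart of the argument is the multi-annulus rigid-motion patching in the last paragraph, which is precisely the place where the half-space geometry and the weighted structure interact in a non-trivial way.
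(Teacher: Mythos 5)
The paper offers no proof of this lemma to compare against: it is stated as a ``slight modification'' of Lemma 5, p.~85, of Kondrat'ev--Oleinik \cite{Kondracev-Oleinik} and imported as a black box. Your reconstruction is, in substance, a correct account of the standard argument behind that result. The cutoff reduction is fine (with the caveat, which you note, that ``for every $r'<r$'' must implicitly carry $C\subset B^-_{r'}(\bm{0})$, as in the only instance used, $r'=2D_0$, $r=3D_0$; otherwise the right-hand side is not even defined), and you have correctly located the genuine work in controlling the rigid-motion part of $\bm{u}$ on the annulus $A$ by $\|\widehat{\nabla}\bm{u}\|_{L^2(\{|\bm{x}|>r'\}\,\cap\,\mathbb{R}^3_-)}$ alone. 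The dyadic patching you sketch does close: the half-annuli $A_k\cup A_{k+1}$ are all similar, so the scale-invariant second Korn inequality gives rigid motions whose consecutive angular parts differ by $O\bigl((2^kr')^{-3/2}\|\widehat{\nabla}\bm{u}\|_{L^2(A_k\cup A_{k+1})}\bigr)$ and whose translations differ by $O\bigl(2^{-k/2}\|\widehat{\nabla}\bm{u}\|_{L^2(A_k\cup A_{k+1})}\bigr)$, both summable by Cauchy--Schwarz, while $\bm{u}/\rho\in L^2$ combined with the local Korn bound forces the rigid parameters to vanish along the dyadic sequence, so the telescoping pins down the rigid motion on $A_0$. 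Two remarks. First, the weighted Korn inequality on all of $\mathbb{R}^3_-$ that you import is itself nontrivial (the naive odd reflection of $u_3$ across $\{x_3=0\}$ is not $H^1$ unless its trace vanishes, so one cannot simply reduce to the full-space first Korn inequality), but it is a genuinely known result in \cite{Kondracev-Oleinik} and \cite{Amrouche-Dambrine-Raudin} and its use here is not circular. Second, since the dyadic machinery already yields a Korn inequality on each exterior region, one could dispense with the cutoff and run the patching directly on $\{|\bm{x}|>r'\}\cap\mathbb{R}^3_-$; your two-stage version is equivalent and perhaps cleaner to write down.
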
  
Now, we are ready to prove the following quantitative Korn inequality.
\begin{theorem}[Korn-type Inequality]\label{korn inequality}
For any function $\bm{u}\in H^1_w(\mathbb{R}^3_-\setminus \overline{C})$ there exists a positive constant $c$, with $c=c(r_0,E_0,D_0)$, such that
	\begin{equation}\label{KI}
		\int\limits_{\mathbb{R}^3_-\setminus \overline{C}}|\nabla\bm{u}|^2\, d\bm{x}\leq c \int\limits_{\mathbb{R}^3_-\setminus \overline{C}}|\widehat{\nabla}\bm{u}|^2\, d\bm{x}.
	\end{equation}
\end{theorem}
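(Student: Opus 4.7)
My plan is to mirror the architecture of the weighted Poincar\'e proof, using the very same partition of unity $\varphi_1,\varphi_2$ from \eqref{property of varphi}--\eqref{behaviour grad phi}. Decomposing the gradient via this partition gives
$$\|\nabla\bm{u}\|^2_{L^2(\mathbb{R}^3_-\setminus C)}\leq 2\|\varphi_1\nabla\bm{u}\|^2_{L^2(\mathbb{R}^3_-\setminus C)}+2\|\varphi_2\nabla\bm{u}\|^2_{L^2(\mathbb{R}^3_-\setminus C)},$$
so that the task reduces to estimating each piece by $\|\widehat{\nabla}\bm{u}\|^2_{L^2(\mathbb{R}^3_-\setminus C)}$.

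The exterior piece $\varphi_2\nabla\bm{u}$ is supported in $\{|\bm{x}|>r\}\cap\mathbb{R}^3_-$, which by the a priori information \eqref{a priori information on C} sits inside $\mathbb{R}^3_-\setminus C$. Applying Lemma \ref{Lemma 5. Oleinik} with any $r'$ satisfying $2D_0<r'<r$ gives directly
$$\|\varphi_2\nabla\bm{u}\|^2_{L^2(\mathbb{R}^3_-\setminus C)}\leq \|\nabla\bm{u}\|^2_{L^2(\{|\bm{x}|>r\}\cap\mathbb{R}^3_-)}\leq c\|\widehat{\nabla}\bm{u}\|^2_{L^2(\mathbb{R}^3_-\setminus C)}.$$

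The interior piece is delicate. I would write $\varphi_1\nabla\bm{u}=\nabla(\varphi_1\bm{u})-\bm{u}\otimes\nabla\varphi_1$ and observe that $\varphi_1\bm{u}\in H^1(B^-_R(\bm{0})\setminus C)$ vanishes on the spherical cap $\partial B^b_R(\bm{0})$, since $\varphi_1\equiv 0$ on $\{|\bm{x}|\geq R\}\cap\overline{\mathbb{R}}^3_-$. A constructive Korn-type inequality for functions vanishing on a positive-measure portion of the boundary of a bounded Lipschitz domain (in the same spirit as the Poincar\'e inequality already borrowed from \cite{Alessandrini-Morassi-Rosset}) then yields
$$\|\nabla(\varphi_1\bm{u})\|^2_{L^2(B^-_R\setminus C)}\leq c\|\widehat{\nabla}(\varphi_1\bm{u})\|^2_{L^2(B^-_R\setminus C)}.$$
Expanding $\widehat{\nabla}(\varphi_1\bm{u})=\varphi_1\widehat{\nabla}\bm{u}+\widehat{\bm{u}\otimes\nabla\varphi_1}$, using $|\nabla\varphi_1|\leq c/\rho$ from \eqref{behaviour grad phi}, and estimating the remaining cross term $\|\bm{u}\otimes\nabla\varphi_1\|^2\leq c\|\bm{u}/\rho\|^2_{L^2(B^-_R\setminus B^-_r)}$ by Hardy's inequality in the exterior of a hemisphere (\cite{Kondracev-Oleinik}, Lemma 3, p.~83) followed by Lemma \ref{Lemma 5. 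Oleinik}, everything collapses onto $\|\widehat{\nabla}\bm{u}\|^2_{L^2(\mathbb{R}^3_-\setminus C)}$. Summing the interior and exterior contributions gives \eqref{KI}.

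The main obstacle is precisely this interior step, namely producing a Korn-type estimate on $B^-_R(\bm{0})\setminus C$ with an explicit constant depending only on $r_0,E_0,D_0$. Should the partial-boundary-vanishing constructive Korn inequality not be applicable verbatim, a robust fallback is to apply Korn modulo infinitesimal rigid motions on $B^-_R\setminus C$ to select $\bm{r}(\bm{x})=\bm{W}\bm{x}+\bm{b}$ with $\|\bm{u}-\bm{r}\|_{H^1(B^-_R\setminus C)}\leq c\|\widehat{\nabla}\bm{u}\|_{L^2(B^-_R\setminus C)}$, and then to bound $|\bm{W}|^2+|\bm{b}|^2$ by testing on an annular region $A=B^-_R\setminus B^-_{r''}$ with $2D_0<r''<R$: on this set Hardy combined with Lemma \ref{Lemma 5. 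Oleinik} give $\|\bm{u}\|^2_{L^2(A)}\leq c\|\widehat{\nabla}\bm{u}\|^2_{L^2(\mathbb{R}^3_-\setminus C)}$, equivalence of norms on the finite-dimensional space of rigid motions gives $|\bm{W}|^2+|\bm{b}|^2\leq c\|\bm{r}\|^2_{L^2(A)}\leq c(\|\bm{u}\|^2_{L^2(A)}+\|\bm{u}-\bm{r}\|^2_{L^2(A)})$, and everything again closes on $\|\widehat{\nabla}\bm{u}\|^2_{L^2(\mathbb{R}^3_-\setminus C)}$. This is the crucial point where the global $H^1_w$ regularity of $\bm{u}$ couples with the bounded-domain Korn estimate to annihilate the residual rigid motion.
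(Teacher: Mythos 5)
Your proposal is correct and follows essentially the same route as the paper: the same partition of unity, the constructive Korn inequality of Alessandrini--Morassi--Rosset for functions vanishing on the spherical cap $\partial B^b_R(\bm{0})$ applied to $\varphi_1\bm{u}$ on $B^-_R(\bm{0})\setminus C$, and Hardy's inequality together with Lemma \ref{Lemma 5. Oleinik} to absorb the cross terms and the exterior contribution. The only differences are cosmetic: you split $\nabla\bm{u}$ as $\varphi_1\nabla\bm{u}+\varphi_2\nabla\bm{u}$ rather than $\nabla(\varphi_1\bm{u})+\nabla(\varphi_2\bm{u})$, and your rigid-motion fallback is unnecessary since the paper invokes exactly the partial-boundary-vanishing Korn inequality (Theorem 5.7 of \cite{Alessandrini-Morassi-Rosset}) that you anticipated.
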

\begin{proof}
From the definition of the function $\varphi_1, \varphi_2$, see \eqref{property of varphi}, we have
	\begin{equation*}
		\|\nabla \bm{u}\|^2_{L^2(\mathbb{R}^3_-\setminus \overline{C})} \leq 2\bigg(\|\nabla (\varphi_1\bm{u})\|^2_{L^2(\mathbb{R}^3_-\setminus \overline{C})} + \|\nabla(\varphi_2\bm{u}) \|^2_{L^2(\mathbb{R}^3_-\setminus \overline{C})}\bigg):=2(\mathcal{N}'_1+\mathcal{N}'_2).  
	\end{equation*}
We study, separately, the two terms $\mathcal{N}'_1$ and $\mathcal{N}'_2$.

By \eqref{supp phi1} we find
	\begin{equation}\label{mathcal N'_1}
		\begin{aligned}
			\mathcal{N}'_1=\|\nabla (\varphi_1\bm{u})\|^2_{L^2(\mathbb{R}^3_-\setminus \overline{C})}&=\|\nabla (\varphi_1\bm{u})\|^2_{L^2(B^-_R(\bm{0})\setminus \overline{C})},\\
		\end{aligned}
	\end{equation}  
hence, since $\varphi_1=0$ on $\partial B^b_R(\bm{0})$, we apply the quantitative Korn inequality for functions vanishing on a portion of the boundary of a bounded domain, see for instance \cite{Alessandrini-Morassi-Rosset} (Theorem 5.7), getting for $c=c(r_0,E_0,D_0)$	 
	\begin{equation*}
		\begin{aligned}
			\|\nabla (\varphi_1\bm{u})\|^2_{L^2(B^-_R(\bm{0})\setminus \overline{C})}&\leq c\, \|\widehat{\nabla} (\varphi_1\bm{u})\|^2_{L^2(B^-_R(\bm{0})\setminus \overline{C})}\\
			&\leq c \bigg(\|\bm{u}\otimes \nabla\varphi_1\|^2_{L^2(B^-_R(\bm{0})\setminus \overline{C})}+\|\varphi_1\widehat{\nabla}\bm{u}\|^2_{L^2(B^-_R(\bm{0})\setminus \overline{C})}\bigg),\\
		\end{aligned}
	\end{equation*}
where in the right side of the previous inequality we have used
	\begin{equation*}
		\|\widehat{\bm{u}\otimes \nabla}\varphi_1\|^2_{L^2(B^-_R(\bm{0})\setminus \overline{C})}\leq \|\bm{u}\otimes \nabla\varphi_1\|^2_{L^2(B^-_R(\bm{0})\setminus \overline{C})}.
	\end{equation*}		
From \eqref{u otimes grad phi_1}, the properties \eqref{property of varphi} and Hardy's inequality \eqref{hardy inequality}, we have
	\begin{equation*}
		\begin{aligned}
			\|\bm{u}\otimes \nabla\varphi_1\|^2_{L^2(B^-_R(\bm{0})\setminus \overline{C})}+&\|\varphi_1\widehat{\nabla}\bm{u}\|^2_{L^2(B^-_R(\bm{0})\setminus \overline{C})}\\
			&\hspace{2cm}\leq c \left(\bigg\|\frac{\bm{u}}{\rho}\bigg\|^2_{L^2(\{|\bm{x}|>r\}\, \cap\, \mathbb{R}^3_-)}+\|\widehat{\nabla}\bm{u}\|^2_{L^2(B^-_R(\bm{0})\setminus \overline{C})}\right)\\
			&\hspace{2cm}\leq c \left(\|\nabla\bm{u}\|^2_{L^2(\{|\bm{x}|>r\}\, \cap\, \mathbb{R}^3_-)}+\|\widehat{\nabla}\bm{u}\|^2_{L^2(B^-_R(\bm{0})\setminus \overline{C})}\right).
		\end{aligned}
	\end{equation*} 
Applying Lemma \ref{Lemma 5. Oleinik} to the first term in the right side of the previous formula, we find
	\begin{equation*}
		\|{\nabla \bm{u}}\|^2_{L^2(\{|\bm{x}|>r\}\,\cap\, \mathbb{R}^3_- )}\leq c \|{\widehat{\nabla} \bm{u}}\|^2_{L^2(\{|\bm{x}|>r'\}\,\cap\, \mathbb{R}^3_- )},
	\end{equation*}
where we choose $r'=2D_0< r$ so that $C\subset B^-_{r'}(\bm{0})$ (see the a priori information \eqref{a priori information on C}). Putting together all these results and going back to \eqref{mathcal N'_1}, we have
	\begin{equation}\label{N'_1}
		\begin{aligned}
			\mathcal{N}'_1\leq c \bigg(\|\widehat{\nabla}\bm{u}\|^2_{L^2(B^-_R(\bm{0})\setminus \overline{C})}+\|{\widehat{\nabla} \bm{u}}\|^2_{L^2(\{|\bm{x}|>r'\}\,\cap\, \mathbb{R}^3_- )}\bigg)\leq c\, \|\widehat{\nabla}\bm{u}\|^2_{L^2(\mathbb{R}^3_-\setminus \overline{C})}.
		\end{aligned}	
	\end{equation}
In a similar way, using the properties \eqref{property of varphi} and \eqref{supp phi2}, we find 
	\begin{equation*}
		\begin{aligned}
			\mathcal{N}'_2=\|\nabla(\varphi_2\bm{u})\|^2_{L^2(\mathbb{R}^3_-\setminus \overline{C})}&=\|\nabla(\varphi_2\bm{u})\|^2_{L^2(\{|\bm{x}|>r\}\,\cap\, \mathbb{R}^3_- )}\\
			&\leq c \bigg( \|\bm{u}\otimes \nabla\varphi_2\|^2_{L^2(\{|\bm{x}|>r\}\,\cap\, \mathbb{R}^3_- )}+\|\varphi_2\nabla\bm{u}\|^2_{L^2(\{|\bm{x}|>r\}\,\cap\, \mathbb{R}^3_- )} \bigg).
		\end{aligned}
	\end{equation*}
From the properties \eqref{property of varphi} and \eqref{behaviour grad phi} of $\varphi_2$, we get
	\begin{equation*}
		\begin{aligned}
			\|\bm{u}\otimes \nabla\varphi_2\|^2_{L^2(\{|\bm{x}|>r\}\,\cap\, \mathbb{R}^3_- )}+&\|\varphi_2\nabla\bm{u}\|^2_{L^2(\{|\bm{x}|>r\}\,\cap\, \mathbb{R}^3_- )}\\
			&\hspace{1.2cm}\leq c \left(\bigg\|\frac{\bm{u}}{\rho}\bigg\|^2_{L^2(\{|\bm{x}|>r\}\, \cap\, \mathbb{R}^3_-)}+\|\nabla\bm{u}\|^2_{L^2(\{|\bm{x}|>r\}\, \cap\, \mathbb{R}^3_-)}\right).
		\end{aligned}
	\end{equation*}
Using again Hardy's inequality \eqref{hardy inequality} and the result in Lemma \ref{Lemma 5. Oleinik} in the last two terms of the previous formula, we find
	\begin{equation}\label{N'_2}
		\mathcal{N}'_2\leq c\, \|\widehat{\nabla}\bm{u}\|^2_{L^2(\{|\bm{x}|>r'\}\, \cap\, \mathbb{R}^3_-)}\leq c\, \|\widehat{\nabla}\bm{u}\|^2_{L^2(\mathbb{R}^3_-\setminus \overline{C})} .
	\end{equation}
Finally, collecting the results in \eqref{N'_1} and \eqref{N'_2}, 
we have the assertion. 
\end{proof}
% % % % % % % % % % % % % % % % % % % % % % % % % % % % % % % % % % % % % % % % % % % % % % % % % % % % % % % % % % % % % % % % % % % % % % % % % % % % % % % % % % % % % % % % % % % % % % % % % % % % % % % % % % % % % % % % % % % % % % % % % % % % % % % % % % % % % % % % % % % % % % % %
\subsection{Well-Posedness}
To study the well-posedness of problem \eqref{direct pb} we use a variational approach. 
We suppose, for the moment, $\bm{u}$ regular and the test functions $\bm{v}$ in $\mathcal{D}(\overline{\mathbb{R}}^3_-\setminus {C})$. Multiplying the equations in \eqref{direct pb} for the functions $\bm{v}$ and integrating in $\mathbb{R}^3_-\setminus \overline{C}$, we obtain
	\begin{equation*}
		\int\limits_{\mathbb{R}^3_-\setminus \overline{C}}\mathbb{C}\widehat{\nabla}\bm{u}:\widehat{\nabla}{\bm{v}}\, d\bm{x}=-p\int\limits_{\partial C}{\bm{n}}\cdot {\bm{v}}\, d\sigma(\bm{x}),\quad \forall {\bm{v}}\in \mathcal{D}(\overline{\mathbb{R}}^3_-\setminus {C}).
	\end{equation*}
Now, from the density property of the functional space $\mathcal{D}(\overline{\mathbb{R}}^3_-\setminus {C})$ into the weighted Sobolev space defined in \eqref{w. Sobolev space}, problem \eqref{direct pb} becomes:\\ 
\textit{
find $\bm{u}\in H^{1}_w(\mathbb{R}^3_-\setminus \overline{C})$ such that 
	\begin{equation}\label{variational formulation}
		a(\bm{u},\bm{v})=f(\bm{v}), \quad \forall {\bm{v}}\in H^{1}_w(\mathbb{R}^3_-\setminus \overline{C}),
	\end{equation}
where $a: H^1_w(\mathbb{R}^3_-\setminus \overline{C})\times H^1_w(\mathbb{R}^3_-\setminus \overline{C}) \to \mathbb{R}$ is the bilinear form given by
	\begin{equation}\label{bilinear form}
	 a(\bm{u},\bm{v})=\int\limits_{\mathbb{R}^3_-\setminus \overline{C}}\mathbb{C}\widehat{\nabla}\bm{u}:\widehat{\nabla}{\bm{v}}\, d\bm{x},
	\end{equation}
and $f: H^1_w(\mathbb{R}^3_-\setminus \overline{C})\to \mathbb{R}$ is the linear functional given by
 	\begin{equation}\label{linear functional f}
 		f(\bm{v})=-p\int\limits_{\partial C}{\bm{n}}\cdot {\bm{v}}\, d\sigma(\bm{x}).
 	\end{equation}
} 
Now, we can prove
\begin{theorem}
Problem \eqref{direct pb} admits a unique solution $\bm{u}\in H^{1}_w
(\mathbb{R}^3_-\setminus \overline{C})$ satisfying
	\begin{equation}\label{estimate of u in H1w}
		\|\bm{u}\|_{H^{1}_w(\mathbb{R}^3_-\setminus \overline{C})}\leq c p, 
	\end{equation}
where the constant $c=c(\lambda,\mu,r_0,E_0,D_0)$.	
\end{theorem}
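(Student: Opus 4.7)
The plan is to recast problem \eqref{direct pb} as the variational problem \eqref{variational formulation} and invoke the Lax--Milgram theorem on $H^1_w(\mathbb{R}^3_-\setminus C)$. Three things must be checked: continuity of the bilinear form $a$, coercivity of $a$, and continuity of the linear functional $f$. The estimate \eqref{estimate of u in H1w} will then follow from the Lax--Milgram bound $\|\bm{u}\|_{H^1_w}\leq \|f\|/\alpha$, where $\alpha$ is the coercivity constant.

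For continuity of $a$, I would apply Cauchy--Schwarz together with the pointwise bound $|\mathbb{C}\widehat{\nabla}\bm{u}:\widehat{\nabla}\bm{v}|\leq c(\lambda,\mu)|\nabla\bm{u}||\nabla\bm{v}|$ to get $|a(\bm{u},\bm{v})|\leq c\|\nabla\bm{u}\|_{L^2}\|\nabla\bm{v}\|_{L^2}\leq c\|\bm{u}\|_{H^1_w}\|\bm{v}\|_{H^1_w}$. For coercivity, the strong convexity of $\mathbb{C}$ gives $a(\bm{u},\bm{u})\geq \xi_0\|\widehat{\nabla}\bm{u}\|^2_{L^2(\mathbb{R}^3_-\setminus C)}$. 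Applying in succession Theorem \ref{korn inequality} and Theorem \ref{weighted poincaré inequality}, I can bound
\begin{equation*}
\|\widehat{\nabla}\bm{u}\|^2_{L^2(\mathbb{R}^3_-\setminus C)}\geq c\|\nabla\bm{u}\|^2_{L^2(\mathbb{R}^3_-\setminus C)}\geq \tfrac{c}{2}\left(\|\nabla\bm{u}\|^2_{L^2(\mathbb{R}^3_-\setminus C)}+\|\rho^{-1}\bm{u}\|^2_{L^2(\mathbb{R}^3_-\setminus C)}\right),
\end{equation*}
which yields $a(\bm{u},\bm{u})\geq \alpha\|\bm{u}\|^2_{H^1_w(\mathbb{R}^3_-\setminus C)}$ with $\alpha=\alpha(\lambda,\mu,r_0,E_0,D_0)$.

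The main technical point is the continuity of $f$, since the boundary integral in \eqref{linear functional f} lives on the compact piece $\partial C$ while the norm controls $\bm{v}$ only in a weighted sense over the unbounded domain. I would exploit the fact that $C\Subset B^-_{2D_0}(\bm{0})$ and that on the bounded Lipschitz set $B^-_R(\bm{0})\setminus C$ the weighted norm is equivalent to the usual $H^1$ norm (since $1\leq \rho\leq c(D_0)$ there). By the standard trace theorem on this bounded Lipschitz domain,
\begin{equation*}
|f(\bm{v})|\leq p|\partial C|^{1/2}\|\bm{v}\|_{L^2(\partial C)}\leq c\,p\,\|\bm{v}\|_{H^1(B^-_R(\bm{0})\setminus C)}\leq c\,p\,\|\bm{v}\|_{H^1_w(\mathbb{R}^3_-\setminus C)},
\end{equation*}
where the constants depend on $r_0,E_0,D_0$. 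This shows $f\in (H^1_w)^*$ with $\|f\|\leq cp$.

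With these three ingredients verified, Lax--Milgram produces a unique $\bm{u}\in H^1_w(\mathbb{R}^3_-\setminus C)$ solving \eqref{variational formulation}, and the quantitative bound $\|\bm{u}\|_{H^1_w}\leq \alpha^{-1}\|f\|\leq cp$ is immediate. The variational identity \eqref{variational formulation}, tested against $\bm{v}\in\mathcal{D}(\overline{\mathbb{R}}^3_-\setminus C)$ (dense in $H^1_w$), recovers the PDE and the Neumann conditions of \eqref{direct pb} in the distributional sense, closing the argument. The subtlest step, as anticipated, is the trace bound for $f$: this is where the specific form of the weight $\rho$ and the a priori bounds on the location and size of $C$ from \eqref{a priori information on C}--\eqref{second a priori information} are essential, so that trace estimates on the bounded piece $B^-_R(\bm{0})\setminus C$ translate into genuine $H^1_w$ control.
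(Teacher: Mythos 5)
Your proposal is correct and follows essentially the same route as the paper: Lax--Milgram in $H^1_w(\mathbb{R}^3_-\setminus C)$, with coercivity obtained from strong convexity plus the constructive Korn and Poincar\'e inequalities of Theorems \ref{korn inequality} and \ref{weighted poincaré inequality}, and continuity of $f$ via the trace theorem on the bounded Lipschitz set containing $\partial C$, where the weight $\rho$ is comparable to $1$. The final bound, whether written as $\|\bm{u}\|_{H^1_w}\leq\alpha^{-1}\|f\|$ or derived by testing with $\bm{v}=\bm{u}$ as the paper does, is the same estimate.
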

\begin{proof}
To prove the well-posedness of problem \eqref{direct pb} we apply Lax-Milgram theorem to \eqref{variational formulation}.
Therefore, we need to prove the coercivity and the continuity property of the bilinear form \eqref{bilinear form} and the boundedness of the linear functional \eqref{linear functional f}.\\
\textit{Continuity of \eqref{bilinear form}}.\\
From the Cauchy-Schwarz inequality we have
	\begin{equation*}
		\begin{aligned}
			|a(\bm{u},\bm{v})|=\Bigg|\int\limits_{\mathbb{R}^3_-\setminus \overline{C}}\mathbb{C}\widehat{\nabla}\bm{u}:\widehat{\nabla}{\bm{v}}\, d\bm{x}\Bigg|& \leq c\, \|\widehat{\nabla}\bm{u}\|_{L^{2}(\mathbb{R}^3_-\setminus \overline{C})} \|\widehat{\nabla}\bm{v}\|_{L^{2}(\mathbb{R}^3_-\setminus \overline{C})}\\
			&\leq c\, \|\bm{u}\|_{H^{1}_w(\mathbb{R}^3_-\setminus \overline{C})} \|\bm{v}\|_{H^{1}_w(\mathbb{R}^3_-\setminus \overline{C})},
		\end{aligned}
	\end{equation*} 
where $c=c(\lambda,\mu)$.\\		
\textit{Coercivity of \eqref{bilinear form}}.\\
We apply the constructive Poincar\'e and Korn inequalities proved in Theorem \ref{weighted poincaré inequality}, Theorem \ref{korn inequality} and the strong convexity condition of $\mathbb{C}$, see \eqref{pos_def}. In detail, we have
	\begin{equation*}
		\begin{aligned}
			a(\bm{u},\bm{u})=\int\limits_{\mathbb{R}^3_-\setminus \overline{C}}\mathbb{C}\widehat{\nabla}\bm{u}:\widehat{\nabla}{\bm{u}}\, d\bm{x}&\geq c\|\widehat{\nabla}\bm{u}\|^2_{L^{2}(\mathbb{R}^3_-\setminus \overline{C})}\\
			&\geq c \|{\nabla}\bm{u}\|^2_{L^{2}(\mathbb{R}^3_-\setminus \overline{C})}\geq c \|\bm{u}\|^2_{H^{1}_w(\mathbb{R}^3_-\setminus \overline{C})},
		\end{aligned}
	\end{equation*}
where the constant $c=c(\lambda,\mu,r_0,E_0,D_0)$. \\	
\textit{Boundedness of \eqref{linear functional f}}.\\
Let us take $B^-_{2D_0}(\bm{0})$. Then applying the trace theorem for bounded domains, we find
	\begin{equation*}
		\begin{aligned}
 		\Bigg|-p\int\limits_{\partial C}{\bm{n}}\cdot {\bm{v}}\, d\sigma(\bm{x})\Bigg|&\leq c\,p\|\bm{v}\|_{L^2(\partial C)}\leq c\, p\Bigg(\bigg\|\frac{\bm{v}}{\rho}\bigg\|_{L^2((B^-_{2D_0}(\bm{0}))\setminus \overline{C})}+{\|\nabla \bm{v}\|_{L^2((B^-_{2D_0}(\bm{0}))\setminus \overline{C})}}\Bigg)\\ 
 		&\leq c\, p \|\bm{v}\|_{H^1_w(\mathbb{R}^3_-\setminus \overline{C})}.
 		\end{aligned} 
 	\end{equation*}
Applying the Lax-Milgram theorem we obtain the well-posedness of problem \eqref{direct pb}. Moreover, by means of the strong convexity condition of $\mathbb{C}$, see \eqref{pos_def}, and from the application of the Korn and Poincar\'e inequalities, see Theorem \ref{weighted poincaré inequality} and Theorem \ref{korn inequality}, we find that 
	\begin{equation*}
		\|\bm{u}\|^2_{H^{1}_w(\mathbb{R}^3_-\setminus C)}\leq \Bigg|\int\limits_{\mathbb{R}^3_-\setminus \overline{C}}\mathbb{C}\widehat{\nabla}\bm{u}:\widehat{\nabla}{\bm{u}}\, d\bm{x}\Bigg|\leq\Bigg| p\int\limits_{\partial C}\bm{n}\cdot \bm{u}\, d\sigma(\bm{x})\Bigg|\leq c p \|\bm{u}\|_{H^{1}_w(\mathbb{R}^3_-\setminus \overline{C})}, 
	\end{equation*}	
where the constant $c=c(\lambda,\mu,r_0,E_0,D_0)$, hence the assertion of the theorem follows.	
\end{proof}
% % % % % % % % % % % % % % % % % % % % % % % % % % % % % % % % % % % % % % % % % % % % % % % % % % % % % % % % % % % % % % % % % % % % % % % % % % % % % % % % % % % % % % % % % % % % % % % % % % % % % % % % % % % % % % % % % % % % % % % % % % % % % % % % % % % % % % % % % % % % % % % %
\section{The Inverse Problem: Uniqueness and Stability Estimate}\label{sec: the inverse problem - uniqueness and stability estimate}
In this section we will investigate the following 
inverse problem: \textit{given the displacement vector $\bm{u}$ on a portion of the boundary of the half-space can we detect uniquely and in a stable way the cavity $C$?}

We suppose to have the measurements on $B'_{s_0}(\bm{0})=\{\bm{x}\in\mathbb{R}^2\, :\, x_1^2+x^2_2< s^2_0\}$ contained in  $\{x_3=0\}$, with $s_0<D_0$.
To prove a stability estimate for the inverse problem we need to require more regularity on $C$ than \eqref{regularity of C}. In particular, we suppose that:  
	\begin{equation}\label{more regularity for C}
		\partial C\,  \textrm{is of class}\, C^3\, \textrm{with constant}\, r_0\, \textrm{and}\, E_0.		
	\end{equation}
In addition, we recall that $C$ satisfy the a priori information \eqref{a priori information on C}, \eqref{first a priori information} and \eqref{second a priori information}.
We also assume that
	\begin{equation*}
		\mathbb{R}^3_-\setminus \overline{C}\, \textrm{is connected}.
	\end{equation*} 	
Before proceeding, we highlight that the proof of the uniqueness and the stability result is based on the possibility to build the displacement field 
	\begin{equation}\label{definition of ubar}
		\overline{\bm{u}}=\frac{p}{3\lambda+2\mu}\bm{x}
	\end{equation}
so to reduce problem \eqref{direct pb} to a problem with homogeneous Neumann boundary conditions on the boundary of the cavity. 
A straightforward calculation shows that $\overline{\bm{u}}$ satisfies the Lam\'e system and the boundary condition on $C$ satisfied by $\bm{u}$.

In this way, the function
	\begin{equation}\label{definition of the function w}
 		\bm{w}:=\bm{u}-\overline{\bm{u}}, 
 	\end{equation}	
satisfies the following boundary value problem	 
	\begin{equation}\label{pb for w}
		\begin{cases}
		\textrm{div}(\mathbb{C}\widehat{\nabla}\bm{w})=\bm{0} & \textrm{in}\, \mathbb{R}^3_-\setminus \overline{C}\\
		(\mathbb{C}\widehat{\nabla}\bm{w})\bm{n}=0 & \textrm{on}\, \partial C\\
		(\mathbb{C}\widehat{\nabla}\bm{w})\bm{e}_3=-p\bm{e}_3 & \textrm{on}\, \mathbb{R}^2\\
    	\bm{w} + \bm{\overline u}\in H^1_w(\mathbb{R}^3_-\setminus \overline{C}), 
		\end{cases}
	\end{equation}
where $\bm{e}_3=(0,0,1)$. The inverse problem reduces therefore to determine the cavity $C$ from a single pair of Cauchy data on $B'_{s_0}(\bm{0})$ of the solution to problem \eqref{pb for w}.

In the sequel, we denote $\bm{w}_i=\bm{u}_i-\overline{\bm{u}}$, for $i=1,2$, where $\bm{w}_i$ and $\bm{u}_i$ are respectively the solutions to \eqref{pb for w} and \eqref{direct pb} with $C=C_i$, for $i=1,2$. It immediately follows that
	\begin{equation}\label{w_1-w_2=u_1-u_2}
		\bm{w}_1-\bm{w}_2=\bm{u}_1-\bm{u}_2,\, \qquad \textrm{in}\,\, {\mathbb{R}}^3_-\setminus \overline{(C_1 \cup C_2)}.
	\end{equation}
Moreover, we indicate with 
	\begin{equation}\label{G unbounded component}
		G\ \, \textrm{the unbounded connected component of}\ \, \mathbb{R}^3_-\setminus(\overline{C_1\cup C_2}).
	\end{equation}	
Notice that $B'_{s_0}(\bm{0})\subset \partial G$.	
\subsection{Uniqueness}
Although the procedure to get the uniqueness result for the inverse problem is known in the literature, for the reader's convenience, we give a sketch of the proof. 
\begin{theorem}\label{th of uniqueness}
Given the single pair of Cauchy data $\{\bm{w},-p\bm{e}_3\}$ on $B'_{s_0}(\bm{0})$ there exists at most one pair ($C$,$\bm{w}$) satisfying problem \eqref{pb for w}.
\end{theorem}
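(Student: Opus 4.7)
I would argue by contradiction: suppose two admissible pairs $(C_1,\bm{w}_1)$ and $(C_2,\bm{w}_2)$ share the same Cauchy data $\{\bm{w},-p\bm{e}_3\}$ on $B'_{s_0}(\bm{0})$. Set $\bm{W}:=\bm{w}_1-\bm{w}_2$ on the unbounded component $G$ of \eqref{G unbounded component}. By \eqref{w_1-w_2=u_1-u_2} and the measurement hypothesis $\bm{W}$ solves the homogeneous Lam\'e system in $G$ with $\bm{W}=\bm{0}$ and $(\mathbb{C}\widehat{\nabla}\bm{W})\bm{e}_3=\bm{0}$ on $B'_{s_0}(\bm{0})$, while $(\mathbb{C}\widehat{\nabla}\bm{W})\bm{e}_3=\bm{0}$ also on all of $\mathbb{R}^2\cap\partial G$ since both $\bm{w}_i$ carry the same surface traction $-p\bm{e}_3$. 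The unique continuation property for the Lam\'e system from Cauchy data on an open portion of the boundary (for instance, by an even/odd reflection across the traction-free face $\{x_3=0\}$ that reduces the problem to a constant-coefficient Holmgren situation in a genuine three-dimensional open neighborhood of $B'_{s_0}(\bm{0})$, followed by propagation of zeros along the connected open set $G$) then forces $\bm{W}\equiv\bm{0}$ in $G$, so $\bm{w}_1\equiv\bm{w}_2$ on $G$.

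Next, assume for contradiction $C_1\neq C_2$. Since each $\mathbb{R}^3_-\setminus C_i$ is connected, up to relabeling there is a non-empty connected component $D$ of $C_1\setminus\overline{C_2}$. Note $\overline{D}\subset\mathbb{R}^3_-\setminus C_2$, hence $\bm{w}_2$ is analytic in a neighborhood of $\overline{D}$, and $\partial D\subset(\partial C_1\cap\overline{G})\cup(\partial C_2\cap\overline{C_1})$. On the pieces of $\partial D$ lying in $\partial C_2$ the boundary condition in \eqref{pb for w} immediately gives $(\mathbb{C}\widehat{\nabla}\bm{w}_2)\bm{n}=\bm{0}$. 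On the pieces lying in $\partial C_1\cap\overline{G}$, the identity $\bm{w}_1\equiv\bm{w}_2$ in $G$, combined with the analyticity of $\bm{w}_2$ across this portion of $\partial C_1$ (which lies away from $\overline{C_2}$) and with the homogeneous Neumann condition satisfied by $\bm{w}_1$ on $\partial C_1$, forces $(\mathbb{C}\widehat{\nabla}\bm{w}_2)\bm{n}=\bm{0}$ there too. Thus $\bm{w}_2$ is a Lam\'e field on the bounded Lipschitz domain $D$ with zero Neumann trace on the whole of $\partial D$; testing against $\bm{w}_2$, integrating by parts and invoking the strong convexity of $\mathbb{C}$ forces $\widehat{\nabla}\bm{w}_2\equiv\bm{0}$ on $D$, so $\bm{w}_2$ coincides on $D$ with a rigid motion $\bm{a}+\bm{A}\bm{x}$ with $\bm{A}^T=-\bm{A}$.

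By real analyticity of solutions of the Lam\'e system and the connectedness of $\mathbb{R}^3_-\setminus C_2$, this identity propagates to all of $\mathbb{R}^3_-\setminus\overline{C_2}$, so $\bm{w}_2(\bm{x})=\bm{a}+\bm{A}\bm{x}$ globally. Combining with \eqref{definition of the function w} and \eqref{definition of ubar} yields
\[
	\bm{u}_2(\bm{x})=\bm{a}+\Bigl(\bm{A}+\tfrac{p}{3\lambda+2\mu}\mathbf{I}\Bigr)\bm{x},
\]
whose strain is the nonzero constant tensor $\tfrac{p}{3\lambda+2\mu}\mathbf{I}$. In particular $\widehat{\nabla}\bm{u}_2\notin L^2(\mathbb{R}^3_-\setminus C_2)$, contradicting $\bm{u}_2\in H^1_w(\mathbb{R}^3_-\setminus C_2)$ provided by the well-posedness result of Section~\ref{sec: the direct problem - well-posedness and regularity result}. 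Therefore $C_1=C_2$, and then $\bm{w}_1\equiv\bm{w}_2$ by well-posedness.

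The \emph{main obstacle} I anticipate is the middle step: producing a bounded subdomain $D$ on which $\bm{w}_2$ inherits a homogeneous Neumann condition on all of $\partial D$. The delicate point is transferring the Neumann condition from $\partial C_1$ (where it is prescribed for $\bm{w}_1$) to $\bm{w}_2$ on the portion $\partial C_1\cap\overline{G}$ even though $D\cap G=\emptyset$; this relies on the analyticity of $\bm{w}_2$ across the relevant part of $\partial C_1$ (available because those boundary points lie away from $\overline{C_2}$), which ensures that the one-sided trace of $\bm{w}_2$ from $G$ coincides with its interior trace from $D$. A secondary subtlety is that Step~1 requires not only the Cauchy data on the disk $B'_{s_0}(\bm{0})$ but also the homogeneous Neumann condition on the rest of $\mathbb{R}^2\cap\partial G$, used in an essential way to turn a one-sided Cauchy datum on a 2D disk into a Cauchy problem posed in a genuine three-dimensional open neighborhood.
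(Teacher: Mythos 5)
Your overall strategy coincides with the paper's: unique continuation from the Cauchy data gives $\bm{w}_1=\bm{w}_2$ in $G$; an energy identity on a suitable subdomain with vanishing traction on its whole boundary forces $\bm{w}_2$ to be a rigid motion there; analytic continuation propagates this to all of $\mathbb{R}^3_-\setminus C_2$; and a contradiction follows (the paper contradicts the prescribed traction $-p\bm{e}_3$ on $B'_{s_0}(\bm{0})$, while your argument that $\widehat{\nabla}\bm{u}_2$ would be a nonzero constant and hence not in $L^2$ is an equally valid variant). However, there is a genuine gap in the middle step. You assert that for a connected component $D$ of $C_1\setminus\overline{C_2}$ one has $\partial D\subset(\partial C_1\cap\overline{G})\cup(\partial C_2\cap\overline{C_1})$. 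This is false in general: $\mathbb{R}^3_-\setminus\overline{C_1\cup C_2}$ may have \emph{bounded} connected components (``pockets'') trapped between the two cavities, and a portion of $\partial D\cap\partial C_1$ may face such a pocket $P$ rather than $G$. Unique continuation only yields $\bm{w}_1=\bm{w}_2$ on the component that reaches the measurement set, i.e.\ on $G$; on $P$ no such identity is available, so you cannot conclude that the traction of $\bm{w}_2$ vanishes on $\partial D\cap\partial C_1\cap\partial P$, and the corresponding boundary term in your energy identity is uncontrolled. A concrete instance: $C_2$ a horseshoe-shaped cavity wrapping around a ball $C_1$ so that a bounded pocket is enclosed between them; then $D=C_1$, $\partial D\cap\partial C_2=\emptyset$, and part of $\partial C_1$ does not lie in $\overline{G}$.

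This is precisely the configuration the paper isolates as its second case (Figure 2), where it notes explicitly that working with $D_1$ alone ``we will not be able to prove that the energy \dots is equal to zero in $D_1$.'' The remedy is to enlarge the test domain to $D=D_1\cup D_2$, adjoining the adjacent bounded components of $\mathbb{R}^3_-\setminus\overline{C_1\cup C_2}$, so that $D$ still lies in $\mathbb{R}^3_-\setminus\overline{C_2}$ (hence $\bm{w}_2$ solves the Lam\'e system throughout $D$) and every piece of $\partial D$ lies either on $\partial C_2$, where the traction of $\bm{w}_2$ vanishes by the boundary condition, or on $\partial C_1\cap\partial G$, where it vanishes by the transfer from $\bm{w}_1$. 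The two subtleties you did flag --- matching the one-sided traces of $\bm{w}_2$ across $\partial C_1$ via interior analyticity, and using the traction condition on $\mathbb{R}^2$ to set up a genuine three-dimensional Cauchy problem --- are handled correctly and agree with the paper; the topological issue of pockets not adjacent to $G$ is the one you missed.
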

\begin{remark}
Theorem \ref{th of uniqueness} is true under weaker regularity assumptions on the cavity $C$. In fact, it is sufficient to have $\partial C$ of Lipschitz class.
\end{remark} 
\begin{proof}[Proof of Theorem \ref{th of uniqueness}]
Suppose by contradiction that there exist two cavities $C_1$ and $C_2$, with $C_1\neq C_2$, and the corrisponding vector displacements $\bm{w}_1$, $\bm{w}_2$ such that 
	\begin{equation*}
		\bm{w}_1\Big|_{B'_{s_0}(\bm{0})}=\bm{w}_2\Big|_{B'_{s_0}(\bm{0})}=\bm{w},\qquad (\mathbb{C}\widehat{\nabla}\bm{w}_1)\bm{e}_3\Big|_{B'_{s_0}(\bm{0})}=(\mathbb{C}\widehat{\nabla}\bm{w}_2)\bm{e}_3\Big|_{B'_{s_0}(\bm{0})}=-p\bm{e}_3.
	\end{equation*}
From the unique continuation theorem for solution to the Lam\'e system, see \cite{weck}, we have
	\begin{equation*}
		\bm{w}_1=\bm{w}_2,\,\qquad \textrm{in}\,\ G, 
	\end{equation*} 
where $G$ is defined in \eqref{G unbounded component}.	
Next, we consider two different cases of intersection of the domains $C_1$ and $C_2$. 
In fact, all the other possibilities can be reduced to these two configurations. 
For example, we take the domain $D$ as in Figure \ref{fig. unicita},
\begin{figure}[h]
\centering
\includegraphics[scale=0.4]{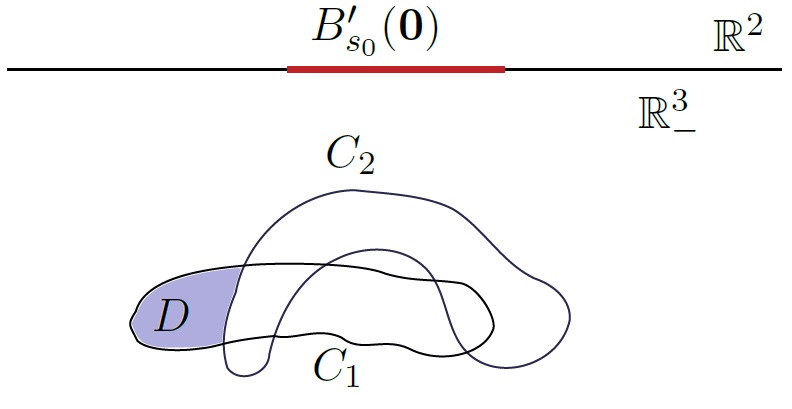}
\caption{The two domains $C_1$ and $C_2$ and the domain $D$.}\label{fig. unicita}
\end{figure}
where the function $\bm{w_2}$ is well-defined and satisfies the elastostatic equations, finding
	\begin{equation*}
		\int\limits_{D}\bigl(\mathbb{C}\widehat{\nabla}\bm{w}_2\bigr):\widehat{\nabla}\bm{w}_2\, d\bm{x}=\int\limits_{\partial D}\bigl(\mathbb{C}\widehat{\nabla}\bm{w}_2\bm{n}\bigr)\cdot\bm{w}_2\, d\sigma(\bm{x}).
	\end{equation*}
Since $\mathbb{C}\widehat{\nabla}\bm{w}_2\bm{n}=0$ on $\partial D\cap \partial C_2$ and by the unique continuation property $\mathbb{C}\widehat{\nabla}\bm{w}_2\bm{n}=\mathbb{C}\widehat{\nabla}\bm{w}_1\bm{n}$ on $\Gamma_1=(\partial D\cap \partial C_1)\subset \partial G$, we get
	\begin{equation*}
		\int\limits_{D}\bigl(\mathbb{C}\widehat{\nabla}\bm{w}_2\bigr):\widehat{\nabla}\bm{w}_2\, d\bm{x}=\int\limits_{\Gamma_1}\bigl(\mathbb{C}\widehat{\nabla}\bm{w}_1\bm{n}\bigr)\cdot\bm{w}_2\, d\sigma(\bm{x})=0,
	\end{equation*}
hence $\bm{w}_2=\mathbf{A}\bm{x}+\bm{a}$ in $D$, where $\mathbf{A}\in\mathbb{R}^{3\times 3}$ is a skew-symmetric matrix and $\bm{a}\in\mathbb{R}^3$. From the unique continuation principle applied to $\bm{w}_2-\mathbf{A}\bm{x}-\bm{a}$ we obtain that $\bm{w}_2=\mathbf{A}\bm{x}+\bm{a}$ in $\mathbb{R}^3_-\setminus \overline{C}_2$, hence $\bigl(\mathbb{C}\widehat{\nabla}\bm{w}_2\bigr) \bm{n}=0$ on $B'_{s_0}(\bm{0})$, that is a contradiction.

The second case we analyse is related to the setting in Figure \ref{fig. unicita1}. To prove the contradiction, in this case we can consider, for example, the domain $D=D_1\cup D_2$, where $D_1$ and $D_2$ are represented in Figure \ref{fig. unicita1}. 
\begin{figure}[h]
\centering
\includegraphics[scale=0.4]{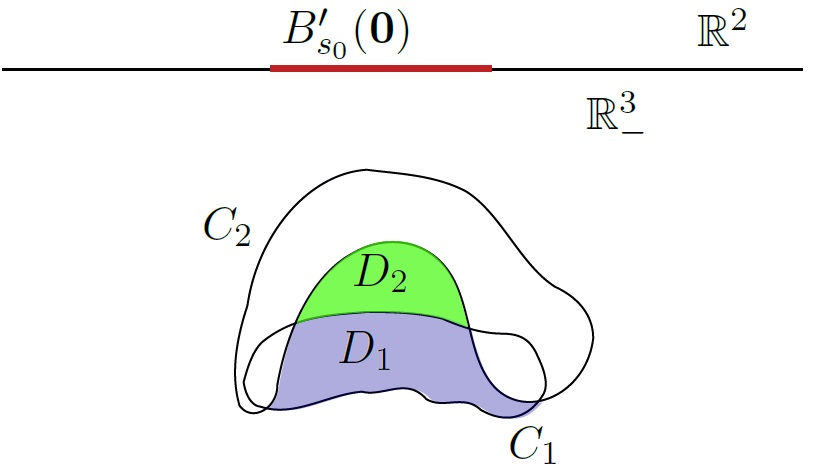}
\caption{The two domains $C_1$ and $C_2$ and the domain $D=D_1\cup D_2$.}\label{fig. unicita1}
\end{figure}
We emphasize that in this setting if we only consider the domain $D_1$ we will not be able to find a contradiction. In fact, from the unique continuation and the Green's formula we would not be able to prove that the energy of the system related to the function $\bm{w}_2$ is equal to zero in $D_1$. Instead, considering the region $D$ and taking  $\bm{w}_2$, which satisfies $\textrm{div}(\mathbb{C}\widehat{\nabla}\bm{w_2})=\bm{0}$ in $D$, from the Green's formula we find, as before,
	\begin{equation*}
		\int\limits_{D}\bigl(\mathbb{C}\widehat{\nabla}\bm{w}_2\bigr):\widehat{\nabla}\bm{w}_2\, d\bm{x}=0,
	\end{equation*} 
hence $\bm{w}_2=\mathbf{A}\bm{x}+\bm{a}$ in $D$, with $\mathbf{A}\in\mathbb{R}^{3\times 3}$ a skew-symmetric matrix and $\bm{a}\in\mathbb{R}^3$. Applying the unique continuation principle to $\bm{w}_2-\mathbf{A}\bm{x}-\bm{a}$ we have that $\bm{w}_2=\mathbf{A}\bm{x}+\bm{a}$ in $\mathbb{R}^3_-\setminus \overline{C}_2$, hence $\bigl(\mathbb{C}\widehat{\nabla}\bm{w}_2\bigr) \bm{n}=0$ on $B'_{s_0}(\bm{0})$, that is a contradiction.
\end{proof}

\subsection{Stability Estimate}
In this section we state and prove the stability estimates for our inverse problem by adapting the arguments contained in \cite{Morassi-Rosset} and \cite{Morassi-Rosset1} to the case of a pressurized cavity in an unbounded domain. 
In order to keep the proof of the main result as readable as possible and since the strategy to get the stability theorem is similar of the one obtained in \cite{Morassi-Rosset} we will not repeat all the details of the proofs of the auxiliary results we need. 
The main idea behind the stability result (Theorem \ref{th: stability estimate}) is to give a quantitative version of the uniqueness argument. More precisely, we combine two steps: 
\begin{enumerate}[(a)]
\item the propagation of the smallness of the Cauchy data up to the boundary of the cavities, leading to an integral estimate of the solutions (see Propositions \ref{prop: SECCD} and \ref{improved stability estimates of continuation from cauchy data}); 
\item an estimate of continuation from the interior (Proposition \ref{LPS}). 
\end{enumerate}
The basic tool for both steps is the three spheres inequality stated in Lemma \ref{3spheres inequality}.

In the sequel, for any $\varrho>0$, we denote by
\begin{equation*}
		\Omega_{\varrho}=\{\bm{x}\in \Omega\, :\, \textrm{dist}(\bm{x},\partial \Omega)> \varrho\}.
	\end{equation*}
We first prove a regularity result on the solution of problem \eqref{direct pb}.
To this end, we consider a bounded domain $Q\subset \mathbb{R}^3_-$ such that
	\begin{align}
		&\partial Q\in C^3\, \textrm{with constants}\, r_0, E_0, 	\label{Omega domain regularity} \\	
		&\overline{B^-_{\alpha D_0}(\bm{0})}\subset\subset \overline{Q}\subset\subset \overline{B^-_{\beta D_0}(\bm{0})},	\label{Omega domain inclusions}
	\end{align}
where $\alpha> 2$ and $\beta\geq 3$, with $\alpha <\beta$.		
Now, we have the following regularity estimate.
\begin{proposition}\label{th regularity estimate}
Under the assumptions \eqref{more regularity for C} for $C$ and \eqref{Omega domain regularity}, \eqref{Omega domain inclusions} for $Q$, the solution of problem \eqref{direct pb}, satisfies
	\begin{equation}\label{eq regularity estimate}
		\|\bm{u}\|_{C^{1,1/2}(\overline{Q}\setminus C)}\leq c p,
	\end{equation}
where the constant $c=c(\lambda,\mu,\alpha,\beta,r_0,E_0,D_0)$.
\end{proposition}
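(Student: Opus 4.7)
The plan is to combine the global bound $\|\bm{u}\|_{H^1_w(\mathbb{R}^3_-\setminus C)}\leq cp$ established in the previous subsection with classical interior and boundary Schauder estimates for the Lam\'e system, assembled through a finite covering of $\overline{Q}\setminus C$ with uniformly controlled geometry. Since $\overline{Q}\setminus C$ is bounded and the weight $\rho$ is bounded above and below there, the weighted $H^1_w$ norm reduces to the standard $H^1$ norm on this set, so in particular $\|\bm{u}\|_{L^2(\overline{Q}\setminus C)}\leq cp$. This is the global $L^2$ control that I would feed into the local estimates.

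I would distinguish four types of points of $\overline{Q}\setminus C$. Away from all boundaries, interior Schauder theory for the homogeneous Lam\'e system produces a local $C^{1,1/2}$ bound by the $L^2$ norm on a slightly larger ball. At a point of $\partial C$, the assumption \eqref{more regularity for C} with explicit constants $r_0,E_0$ allows a uniform $C^3$ flattening diffeomorphism, reducing matters to a Neumann problem for a second-order elliptic system with $C^2$ coefficients and $C^2$ boundary datum $p\bm{n}$ of norm controlled by $cp$; classical boundary Schauder estimates then yield a local $C^{1,1/2}$ bound by $c(\|\bm{u}\|_{L^2}+p)$. At a point of $\mathbb{R}^2\cap\overline{Q}$ the same scheme applies with a flat boundary and zero Neumann datum, giving a local estimate by $c\|\bm{u}\|_{L^2}$. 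At points of $\partial Q$ lying in the interior of $\mathbb{R}^3_-\setminus C$, an interior estimate on a ball contained in $\mathbb{R}^3_-\setminus C$ is all that is needed; the geometric assumptions \eqref{a priori information on C} and \eqref{Omega domain inclusions} with $\alpha>2$ provide a positive lower bound on the distance between $\partial Q\setminus\mathbb{R}^2$ and $\partial C\cup\mathbb{R}^2$, which controls the radius of these interior balls uniformly.

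The a priori condition $d(C,\mathbb{R}^2)\geq D_0$ from \eqref{first a priori information} is crucial, as it guarantees that $\partial C$ and $\mathbb{R}^2$ are uniformly separated and no mixed Neumann--Neumann corner needs to be treated. Summing the finitely many local estimates, whose number is controlled by $r_0,D_0,\beta$, and inserting the global bound $\|\bm{u}\|_{L^2(\overline{Q}\setminus C)}\leq cp$, I would obtain \eqref{eq regularity estimate} with constant of the required form.

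The main technical obstacle will be tracking the uniformity of the constants in the local Schauder estimates. The key observation is that the $C^3$ regularity of $\partial C$ with \emph{explicit} constants $r_0,E_0$ produces a uniform lower bound on the size of a good local chart at every boundary point, so the boundary Schauder constant depends only on $\lambda,\mu,r_0,E_0$; once this is in place, the assembly is routine. The H\"older exponent $1/2$ is an arbitrary value in $(0,1)$, fixed here in view of its role in the stability analysis to come.
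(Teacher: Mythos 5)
Your argument is sound, but it is genuinely different from the one in the paper. The paper does not use a covering by local Schauder estimates: it first invokes the explicit integral representation formula \eqref{representation formula} with the half-space Neumann function $\mathbf{N}$ (Theorem \ref{thm:fundsol}) to obtain pointwise bounds $\|D^k\bm{u}\|_{L^\infty(\partial Q\setminus\partial C)}\leq cp$, $k=0,\dots,3$, on the \emph{artificial} part of the boundary of $Q\setminus C$; it then applies a single global $H^3$ a priori estimate for the traction problem on the bounded domain $Q\setminus C$ (Valent), feeding in the $L^2$ bound coming from \eqref{estimate of u in H1w} and the $W^{3/2,2}$ control of the traction on the three pieces of $\partial(Q\setminus C)$; finally it concludes by the Sobolev embedding $H^3\hookrightarrow C^{1,1/2}$ in dimension three --- which is exactly where the exponent $1/2$ comes from, so it is less ``arbitrary'' in the paper's scheme than in yours. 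The difficulty that both proofs must face is the same: on $\partial Q\setminus(\partial C\cup\mathbb{R}^2)$ no boundary condition is prescribed. The paper resolves it by computing $\bm{u}$ and its derivatives there from the explicit kernel; you resolve it by observing that such points are interior to $\mathbb{R}^3_-\setminus C$ at a distance controlled by $(\alpha-2)D_0$ from $C$, so interior estimates on balls protruding outside $Q$ suffice. Your route is more self-contained (no Neumann function, no global $H^3$ theorem) and shows the H\"older exponent could be any $\alpha<1$; its cost is that you must justify, for the Lam\'e \emph{system} with traction conditions, both the local $L^2\to L^\infty$ step (no maximum principle is available, so this goes through $L^2$-based iteration or Agmon--Douglis--Nirenberg theory) and the uniformity of the boundary Schauder constants in the $C^3$ charts of $\partial C$ --- points you flag but leave at the level of ``classical''. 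Both arguments yield the stated constant dependence.
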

Before proving this theorem, we briefly recall the integral representation formula for the solution to problem \eqref{direct pb} derived in \cite{Aspri-Beretta-Mascia}. In particular, we define first the Neumann function, solution to
	\begin{equation}\label{Neumann function}
		\begin{cases}
			\textrm{div}(\mathbb{C} \widehat{\nabla}{\mathbf{N}}(\cdot,\bm{y}))={\delta_{\bm{y}}}\mathbf{I}
			& \textrm{in }\, \mathbb{R}^3_-\\
			(\mathbb{C}\widehat{\nabla}{\mathbf{N}}(\cdot,\bm{y}))\bm{n}=0 &  \textrm{on }\, \mathbb{R}^2\\
			{\mathbf{N}}=O(|\bm{x}|^{-1}),\qquad |\nabla{\mathbf{N}}|=O(|\bm{x}|^{-2}) & |\bm{x}|\to \infty,
		\end{cases}	
	\end{equation}
where ${\delta_{\bm{y}}}$ is the delta function centred at $\bm{y}\in\mathbb{R}^3_-$ and $\mathbf{I}$ is the identity matrix (see Theorem \ref{thm:fundsol} in Appendix for its explicit expression). 
Then, for any $\bm{y}\in  \overline{\mathbb{R}}^3_-\setminus \overline{C}$ we have
	\begin{equation}\label{representation formula}
		\bm{u}(\bm{y})=p\int\limits_{\partial C}\mathbf{N}(\bm{x},\bm{y})\bm{n}(\bm{x})\, d\sigma(\bm{x})-\int\limits_{\partial C}\big[\big(\mathbb{C}\widehat{\nabla}_{\bm{x}}\mathbf{N}(\bm{x},\bm{y})\big)\bm{n}(\bm{x})\big]^T\bm{f}(\bm{x})\, d\sigma(\bm{x}),
	\end{equation}
where $\bm{f}$ is the trace of $\bm{u}$ on $\partial C$ and $\bm{n}$ is the outer unit normal vector on $\partial C$ (for more details see \cite{Aspri-Beretta-Mascia}).

\begin{proof}[Proof of Theorem \ref{th regularity estimate}]
Since the kernels of the integral operators in \eqref{representation formula} are regular for $\bm{y}\in\partial Q\setminus\partial C$, we can estimate $D^k\bm{u}(\bm{y})$, for $k=0,1,2,3$, getting easily
	\begin{equation*}
		|D^k\bm{u}(\bm{y})|\leq p |\partial C|\, \sup\limits_{\mathclap{\substack{\bm{x}\in\partial C \vspace{0.05cm}\\ \bm{y}\in\partial Q\setminus\partial C}}}|D^k_{\bm{y}}\mathbf{N}(\bm{x},\bm{y})|+|\partial C|^{1/2}\|\bm{f}\|_{L^2(\partial C)}\,\sup\limits_{\mathclap{\substack{\bm{x}\in\partial C \vspace{0.05cm} \\ \bm{y}\in\partial Q\setminus\partial C}}}|D^k_{\bm{y}}(\mathbb{C}\widehat{\nabla}_{\bm{x}}\mathbf{N}(\bm{x},\bm{y}))|.
	\end{equation*}
From the regularity properties of $\mathbf{N}$ and Theorem \ref{thm:fundsol} in Appendix, we have
	\begin{equation}\label{Linf estimates of representation formula}
		\begin{aligned}
			\sup\limits_{\mathclap{\substack{\bm{x}\in\partial C \vspace{0.05cm}\\ \bm{y}\in\partial Q\setminus\partial C}}}|D^k_{\bm{y}}\mathbf{N}(\bm{x},\bm{y})|&\leq \frac{c}{D^{k+1}_0},\qquad\qquad
			\sup\limits_{\mathclap{\substack{\bm{x}\in\partial C \vspace{0.05cm} \\ \bm{y}\in\partial Q\setminus\partial C}}}|D^k_{\bm{y}}(\mathbb{C}\widehat{\nabla}_{\bm{x}}\mathbf{N}(\bm{x},\bm{y}))|&\leq \frac{c}{D^{k+2}_0},
		\end{aligned}
	\end{equation} 
where the constant $c=c(\lambda,\mu,\alpha)$. 
From the trace estimate applied in $Q$, we have
	\begin{equation}\label{estimate of f}
		\|\bm{f}\|_{L^2(\partial C)}\leq c\|\bm{u}\|_{H^1_w(\mathbb{R}^3_-\setminus \overline{C})}\leq c p,
	\end{equation}
hence, from \eqref{Linf estimates of representation formula} and \eqref{estimate of f}, we get
	\begin{equation*}
		|D^k\bm{u}(\bm{y})|\leq c p,
	\end{equation*}
where the constant $c=c(\lambda,\mu,\alpha,r_0,E_0,D_0)$. 
Therefore
	\begin{equation}\label{estimate on partialOmega setminus partial C}
		\|D^k\bm{u}\|_{L^{\infty}(\partial Q\setminus \partial C)}\leq c p,\qquad k=0,1,2,3.
	\end{equation}
Next, we apply the following global regularity estimate for the elastostatic system with Neumann boundary conditions (see \cite{Valent}, Theorem 6.6, p. 79) for $\bm{u}$ in $Q\setminus \overline{C}$, that is
	\begin{equation}\label{estimate H3}
		\|\bm{u}\|_{H^3(Q\setminus \overline{C})}\leq c\bigg(\|\bm{u}\|_{L^2(Q\setminus \overline{C})}+\|(\mathbb{C}\widehat{\nabla}\bm{u})\bm{n}\|_{W^{3/2,2}(\partial(Q\setminus \overline{C}))}\bigg),
	\end{equation}
with $c=c(\lambda,\mu,\alpha,\beta,r_0,E_0,D_0)$.	
Now, observe that by \eqref{estimate of u in H1w}
	\begin{equation}\label{estimate on L2(omega setminus c)}
		\|\bm{u}\|_{L^2(Q\setminus \overline{C})}\leq c\, \bigg\|\frac{\bm{u}}{\rho}\bigg\|_{L^2(\mathbb{R}^3_-\setminus \overline{C})}\leq c p, 
	\end{equation}
where $c=c(\lambda,\mu,\alpha,\beta,r_0,E_0,D_0)$, while, for the term $\|(\mathbb{C}\widehat{\nabla}\bm{u})\bm{n}\|_{W^{3/2,2}(\partial(Q\setminus \overline{C}))}$ we have
	\begin{equation*}
		(\mathbb{C}\widehat{\nabla}\bm{u})\bm{n}=\bm{0},\quad \textrm{on}\, 	\partial Q \cap \{x_3=0\},\qquad\qquad (\mathbb{C}\widehat{\nabla}\bm{u})\bm{n}=p\bm{n},\quad \textrm{on}\, \partial C,
	\end{equation*}
and since $\partial C$ is of class $C^3$ (see \eqref{more regularity for C}) it follows $(\mathbb{C}\widehat{\nabla}\bm{u})\bm{n}\in C^2(\partial C)$, hence
	\begin{equation}\label{estimate traction on partial c}
		\|(\mathbb{C}\widehat{\nabla}\bm{u})\bm{n}\|_{W^{3/2,2}(\partial C)}\leq c p.
	\end{equation}
Analogously, from the estimate \eqref{estimate on partialOmega setminus partial C} and the regularity of the boundary of $Q$, we find
	\begin{equation}\label{estimate traction on partial omega}
		\|(\mathbb{C}\widehat{\nabla}\bm{u})\bm{n}\|_{W^{3/2,2}(\partial Q\setminus \partial C)}\leq c p.
	\end{equation}
Therefore, collecting \eqref{estimate on L2(omega setminus c)}, \eqref{estimate traction on partial c} and \eqref{estimate traction on partial omega}, the estimate \eqref{estimate H3} gives
	\begin{equation*}
		\|\bm{u}\|_{H^3(Q\setminus \overline{C})}\leq c p.
	\end{equation*} 
Finally, applying the Sobolev embedding theorem \eqref{eq regularity estimate} follows.
\end{proof}

\begin{proposition}[Lipschitz propagation of smallness]\label{LPS}
Under the assumptions \eqref{bounds_lame}, \eqref{a priori information on C}, \eqref{first a priori information}, \eqref{second a priori information} and \eqref{more regularity for C}, let $\bm{w}$ be the solution to \eqref{pb for w}.
There exist $R\geq 3D_0$, $R=R(\lambda, \mu, r_0, E_0, D_0)$, and $s>1$, $s=s(\lambda, \mu, E_0)$, such that for every $\varrho>0$ and every $\overline{\bm{x}}\in (B^-_R(\bm{0})\setminus \overline{C})_{s\varrho}$, we have	
	\begin{equation}
		\int\limits_{B_{\varrho}(\overline{\bm{x}})}|\widehat{\nabla} \bm{w}|^2\, d\bm{x}\geq
				\frac{c}{e^{a \varrho^{-b}}}
		\int\limits_{B^-_R(\bm{0})\setminus \overline{C}}|\widehat{\nabla}\bm{w}|^2\, d\bm{x},
	\end{equation}
where $a,b,c>0$ depend on $\lambda,\mu,r_0,E_0$ and $D_0$.
\end{proposition}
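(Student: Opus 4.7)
The plan is to adapt the chain of balls argument from Morassi--Rosset \cite{Morassi-Rosset} to our half-space setting. The basic tool is Lemma~\ref{3spheres inequality} (three spheres for the Lam\'e system) applied to the components of $\bm{w}$, combined with interior Caccioppoli and Korn estimates on concentric balls in order to translate the estimate from $\bm{w}$ to $\widehat{\nabla}\bm{w}$. This is also the reason for the dilation factor $s>1$: the centers of the chain must keep a margin of order $\varrho$ from the boundary so that the three spheres inequality can be applied with the outer ball still inside $B^-_R(\bm{0})\setminus C$.

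The proof would proceed in two steps. First, fix $R \geq 3D_0$ (a concrete choice is $R = 5D_0$) so that $C$ is contained in $B^-_R(\bm{0})$ with a bounded collar of width $D_0$. Cover the bounded domain $B^-_R(\bm{0})\setminus C$ by finitely many balls of radius $\varrho$ of bounded overlap; their number is bounded by $N_c \leq c\varrho^{-3}$. By pigeonhole there exists a ball $B_\varrho(\bm{x}_*)$ with
\begin{equation*}
\int_{B_\varrho(\bm{x}_*)}|\widehat{\nabla}\bm{w}|^2\, d\bm{x}\;\geq\;\frac{c\varrho^3}{D_0^3}\int_{B^-_R(\bm{0})\setminus C}|\widehat{\nabla}\bm{w}|^2\, d\bm{x}.
\end{equation*}
Second, given an arbitrary $\overline{\bm{x}}\in(B^-_R(\bm{0})\setminus C)_{s\varrho}$, I would build a chain of centers $\bm{x}_*=\bm{x}_0,\bm{x}_1,\dots,\bm{x}_N=\overline{\bm{x}}$ at mutual distance $\leq \varrho/c_0$, all lying at distance $\geq s\varrho$ from $\partial(B^-_R(\bm{0})\setminus C)$, with length $N$ controlled polynomially in $\varrho^{-1}$ by the a priori data. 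Iterating the three spheres inequality along the chain yields a bound of the shape
\begin{equation*}
\int_{B_\varrho(\bm{x}_*)}|\widehat{\nabla}\bm{w}|^2\, d\bm{x}\;\leq\;K\Bigl(\int_{B_\varrho(\overline{\bm{x}})}|\widehat{\nabla}\bm{w}|^2\, d\bm{x}\Bigr)^{\theta^N}\Bigl(\int_{B^-_R(\bm{0})\setminus C}|\widehat{\nabla}\bm{w}|^2\, d\bm{x}\Bigr)^{1-\theta^N},
\end{equation*}
with $\theta\in(0,1)$ and $K\geq 1$ depending only on the data. Solving this for the integral over $B_\varrho(\overline{\bm{x}})$, combining with the pigeonhole lower bound at $\bm{x}_*$, and using the $L^\infty$ bound on $\widehat{\nabla}\bm{w}$ from Proposition~\ref{th regularity estimate} to control the total energy in terms of $\|\widehat{\nabla}\bm{w}\|_{L^\infty}^2|B^-_R(\bm{0})\setminus C|$, produces after elementary rearrangement the claimed form $c\,e^{-a\varrho^{-b}}$.

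The main obstacle I expect is the geometric construction of the chain when $\overline{\bm{x}}$ lies close to $\partial C$: the chain must go around the cavity while maintaining the distance $s\varrho$ from the boundary, and quantitative control on the length of such admissible paths is precisely where the Lipschitz character $(r_0, E_0)$ of $\partial C$ enters. A related subtlety is the choice of $s$, which has to be large enough (in terms of $\lambda, \mu, E_0$) that a tubular neighborhood of width $s\varrho$ around any boundary point admits a bi-Lipschitz flattening, so that the chain can bend smoothly around non-convex parts of $\partial C$. Finally, obtaining the single-exponential rate $e^{-a\varrho^{-b}}$ rather than a double-exponential one requires an optimized iteration of three spheres in which the radii and the parameter $\theta$ are balanced so that the accumulated multiplicative constant remains under control as $N\to\infty$; the standard geometric-growth variant of the chain (radii doubling rather than kept constant when possible) is the key combinatorial trick.
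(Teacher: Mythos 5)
Your outline reproduces the standard chain-of-balls/three-spheres machinery, which the paper itself simply imports from \cite{Morassi-Rosset} to reach the intermediate bound \eqref{LPS primitive version}; your remarks on the dilation factor $s$, the Lipschitz geometry near $\partial C$, and the geometric growth of radii needed to get a single exponential are all consistent with that. But there is a genuine gap at the pigeonhole step, and it is exactly where the paper does its real work. You cover all of $B^-_R(\bm{0})\setminus C$ by balls of radius $\varrho$ and select one carrying a fraction $c\varrho^3$ of the total energy; nothing prevents that ball from sitting in the collar $\Omega\setminus\Omega_{(s+1)\varrho}$ within distance $s\varrho$ of $\partial C$ or of $\partial B^-_R(\bm{0})$, where it cannot serve as the starting point of an admissible chain (your own chain requires all centers at distance $\geq s\varrho$ from the boundary). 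To run the argument you must show that a definite fraction of $\int_{\Omega}|\widehat{\nabla}\bm{w}|^2$ lives in $\Omega_{(s+1)\varrho}$, i.e.\ that the energy does not concentrate in the boundary layer. This non-concentration is the heart of the paper's proof and requires two quantitative ingredients you do not supply: (a) a \emph{lower} bound $\int_{\Omega}|\widehat{\nabla}\bm{w}|^2\geq cp^2R^3$, obtained on the region $S=\{x_3\leq -\tfrac34 R\}$ far below the cavity, where the representation formula \eqref{representation formula} gives $|\widehat{\nabla}\bm{u}|\leq cp/R^2$ while $\widehat{\nabla}\overline{\bm{u}}=\frac{p}{3\lambda+2\mu}\mathbf{I}$, so that $|\widehat{\nabla}\bm{w}|\geq \frac{p}{2(3\lambda+2\mu)}$ once $R$ is chosen large enough --- this is precisely why $R$ must depend on $\lambda,\mu$ and cannot simply be fixed as $5D_0$; and (b) an \emph{upper} bound $\int_{\Omega\setminus\Omega_{(s+1)\varrho}}|\widehat{\nabla}\bm{w}|^2\leq cp^2\varrho R^2$, from the $C^{1,1/2}$ estimate of Proposition \ref{th regularity estimate} together with the volume bound on the collar. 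Together these give $\mathcal{I}_1/\mathcal{I}_2\leq 1/2$ for $\varrho$ small, which legitimizes restricting the pigeonhole to the interior region.

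Note also that your intended use of the $L^\infty$ bound ``to control the total energy from above'' points the wrong way: an upper bound on $\int_\Omega|\widehat{\nabla}\bm{w}|^2$ is of no help for the claimed lower bound on $\int_{B_\varrho(\overline{\bm{x}})}|\widehat{\nabla}\bm{w}|^2$; what is needed is the lower bound (a) above, which exploits the specific structure $\bm{w}=\bm{u}-\overline{\bm{u}}$ with $\overline{\bm{u}}$ of constant nonzero strain. Finally, a minor point: Lemma \ref{3spheres inequality} is already stated for $|\widehat{\nabla}\bm{w}|^2$, so no Caccioppoli/Korn translation between $\bm{w}$ and $\widehat{\nabla}\bm{w}$ is required.
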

The proof of this proposition is based on the application of the three-spheres inequality. For the reader's convenience we recall here only the statement of the theorem in the case of the linear elasticity in a homogenous and isotropic medium; for a more general case and its proof one can refer to \cite{Alessandrini-Morassi,Alessandrini-Morassi-Rosset-3SFERE}.

\begin{lemma}[Three spheres inequality]\label{3spheres inequality}
Let $\Omega$ be a bounded domain in $\mathbb{R}^3$. Let $\bm{w}\in
H^1(\Omega)$ be a solution to the Lam\'{e} system. There exists ${\vartheta^*}$,
$0<{\vartheta^*}\leq 1$, only depending on $\lambda$ and $\mu$ such that for every $r_1, r_2, r_3, \overline{r}$,
$0<r_1<r_2<r_3\leq {\vartheta^*}\overline{r}$, and for every $\bm{x}\in
\Omega_{\overline{r}}$ we have
	\begin{equation}\label{eq:3sphres}
   		\int\limits_{B_{r_{2}}(\bm{x})}|\widehat{\nabla}\bm{w}|^{2} \leq c
   		\left(\displaystyle\int\limits_{B_{r_{1}}(\bm{x})}|\widehat{\nabla}\bm{w}|^{2}
   		\right)^{\delta}\left(\displaystyle\int\limits_{B_{r_{3}}(\bm{x})}|\widehat{\nabla} \bm{w}|^{2}
   		\right)
   		^{1-\delta},
	\end{equation}
where $c>0$ and $\delta$, $0<\delta<1$, only depend on
$\lambda$, $\mu$, $\frac{r_{2}}{r_{3}}$ and are monotone  increasing functions of $\frac{r_{1}}{r_{3}}$.
\end{lemma}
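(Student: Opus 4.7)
My approach would be through a Carleman estimate for the Lamé operator, which is the standard tool for propagation-of-smallness results in elliptic PDE. By translation I may assume $\bm{x} = \bm{0}$, and it suffices to consider $\bm{w}$ defined on the ball $B_{r_3}(\bm{0})$. The goal is to produce a weighted inequality of the form
\begin{equation*}
\int\tau^{\gamma} e^{2\tau\phi}|\widehat{\nabla}\bm{w}|^2\,d\bm{x} \leq c\int e^{2\tau\phi}\bigl(|L\bm{w}|^2 + \text{lower order boundary/cutoff terms}\bigr)\,d\bm{x},
\end{equation*}
valid for all sufficiently large $\tau$, with a radial pseudoconvex weight $\phi(\bm{x}) = -\log|\bm{x}|$ (or a suitable regularization such as $\phi(\bm{x}) = |\bm{x}|^{-\alpha}$ for $\alpha$ small). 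Here $L$ denotes the Lamé operator $\mathrm{div}(\mathbb{C}\widehat{\nabla}\cdot)$.

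The main steps are as follows. First, I would establish the Carleman estimate for the Lamé system. This is where the work lies: unlike the scalar Laplacian, the coupling through $(\lambda+\mu)\nabla\mathrm{div}$ prevents a direct application of Hörmander's approach; however, one can exploit the fact that $\mathrm{div}\,\bm{w}$ and $\mathrm{curl}\,\bm{w}$ are harmonic (from $(\lambda+2\mu)\Delta\mathrm{div}\,\bm{w}=0$ and $\mu\Delta\mathrm{curl}\,\bm{w}=\bm{0}$), and combine scalar Carleman estimates for these potentials with an elliptic regularity estimate that recovers $\widehat{\nabla}\bm{w}$ from $\mathrm{div}\,\bm{w}$ and $\mathrm{curl}\,\bm{w}$; alternatively one can prove a vector-valued Carleman estimate directly, following the Alessandrini--Morassi approach. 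Second, introduce a cutoff $\chi\in C^\infty_c(B_{r_3}(\bm{0}))$ with $\chi\equiv 1$ on $B_{r_2'}(\bm{0})$ for some $r_2<r_2'<r_3$, and $\chi\equiv 0$ in a neighborhood of $\partial B_{r_3}(\bm{0})$, and also cut off a small ball $B_{r_1'}(\bm{0})$ with $r_1<r_1'<r_2$. Apply the Carleman estimate to $\chi\bm{w}$. Since $L\bm{w}=\bm{0}$, the term $L(\chi\bm{w})$ is supported where $\nabla\chi\neq \bm{0}$, i.e.\ in two annuli: an inner one between radii $r_1$ and $r_1'$ and an outer one between $r_2'$ and $r_3$.

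Third, use the monotonicity of $e^{2\tau\phi}$ in $|\bm{x}|$ (since $\phi$ is decreasing) to bound the left-hand side below by $e^{2\tau\phi(r_2)}\int_{B_{r_2}}|\widehat{\nabla}\bm{w}|^2$ and the right-hand side above by a weighted sum
\begin{equation*}
e^{2\tau\phi(r_1)}\int_{B_{r_1'}}|\widehat{\nabla}\bm{w}|^2 + e^{2\tau\phi(r_2')}\int_{B_{r_3}}|\widehat{\nabla}\bm{w}|^2.
\end{equation*}
Finally, optimize over $\tau$: writing $A=\int_{B_{r_1}}|\widehat{\nabla}\bm{w}|^2$ and $B=\int_{B_{r_3}}|\widehat{\nabla}\bm{w}|^2$, the standard balancing $\tau \sim \log(B/A)$ yields the multiplicative estimate $\int_{B_{r_2}}|\widehat{\nabla}\bm{w}|^2 \leq c\,A^{\delta}B^{1-\delta}$, with an exponent $\delta\in(0,1)$ depending on the ratios $r_1/r_3$ and $r_2/r_3$ only through the weight $\phi$, as required.

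The principal obstacle is unquestionably step one — the Carleman estimate for the Lamé system. The coupling means that one cannot simply treat the components as independent scalar functions, and the pseudoconvexity condition must be verified for the full symbol of $L$. The harmonic-decomposition shortcut is clean but introduces constants that depend on $\lambda$ and $\mu$ through the condition $\lambda+2\mu>0$; the direct vector Carleman estimate (as in Alessandrini--Morassi--Rosset) is technically heavier but gives a cleaner dependence. Once this estimate is in hand, the passage to the three-spheres inequality through cutoff and optimization is essentially a routine exercise, and the dependence of $\delta$ on $r_1/r_3$ being increasing follows from the convexity of $\tau\phi$ in $\log|\bm{x}|$.
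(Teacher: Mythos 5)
The paper offers no proof of Lemma \ref{3spheres inequality}: it is imported from the literature, with the proof explicitly delegated to \cite{Alessandrini-Morassi} and \cite{Alessandrini-Morassi-Rosset-3SFERE}, so there is no in-paper argument to compare yours against. That said, your outline is broadly consistent with how those references proceed: for the constant-coefficient isotropic Lam\'e system the decoupling you mention ($\mathrm{div}\,\bm{w}$ and $\mathrm{curl}\,\bm{w}$ harmonic) is exactly the device used to reduce to scalar Carleman/three-spheres technology, and the cutoff-plus-optimization-in-$\tau$ step is standard. The one point you pass over too quickly --- and it is where the cited proofs spend real effort --- is that the inequality is stated for the strain $\widehat{\nabla}\bm{w}$ rather than for $\bm{w}$ or $\nabla\bm{w}$. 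A Carleman estimate applied to $\chi\bm{w}$ naturally yields a three-spheres inequality for $\int|\bm{w}|^2$ (or, since derivatives of solutions of a constant-coefficient system are again solutions, for $\int|\nabla\bm{w}|^2$). Converting this into the stated two-sided estimate for $\int|\widehat{\nabla}\bm{w}|^2$ requires a Caccioppoli inequality on a slightly enlarged middle ball and a Korn--Poincar\'e inequality modulo infinitesimal rigid motions on the small ball; crucially, the rigid motion subtracted must be the \emph{same} on all three balls, and comparing the optimal rigid motions on concentric balls of different radii is precisely where the quantitative dependence of $c$ and $\delta$ on $r_1/r_3$ and $r_2/r_3$ enters in \cite{Alessandrini-Morassi-Rosset-3SFERE}. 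With that step made explicit, your plan is a faithful reconstruction of the known proof; without it, the left- and right-hand sides of \eqref{eq:3sphres} do not yet match what your Carleman argument produces.
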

Now, the Lipschitz propagation of smallness inequality can be proved. 
\begin{proof}[Proof of Proposition \ref{LPS}]
Let us denote $\Omega=B^-_R(\bm{0})\setminus \overline{C}$, with $R\geq 3D_0$ to be chosen later. Since the hemisphere $B^-_1(\bm{0})$ has Lipschitz boundary with absolute constants $r^*, E^*$, $B^-_R(\bm{0})$ has Lipschitz boundary with constants $r^*R, E^*$.
Possibly worsening the regularity parameters of $C$, see \eqref{more regularity for C}, we can assume $E_0\geq E^*$ and $r_0\leq r^* R$, so that the boundary of $\Omega$ is of Lipschitz class with constants $r_0$ and $E_0$.

Following similar arguments as those in \cite{Morassi-Rosset} with the simplification of mantaining as integrand function $|\widehat{\nabla}\bm{w}|^2$, we find that
there exist $\varrho_0=\varrho_0(\lambda,\mu,r_0,E_0,R)$, with $0<\varrho_0<1$, and $s=s(\lambda,\mu,E_0)$, $s>1$, such that for all $0<\varrho\leq \varrho_0$ and for all $\overline{\bm{x}}\in\Omega_{s\varrho}$, it holds
	\begin{equation}\label{LPS primitive version}
		\int\limits_{B_{\varrho}(\overline{\bm{x}})}|\widehat{\nabla}\bm{w}|^2\, d\bm{x}\geq c_1\int\limits_{\Omega}|\widehat{\nabla}\bm{w}|^2\, d\bm{x}\left(\frac{c\varrho^{3}\displaystyle\int\limits_{\Omega_{(s+1)\varrho}}|\widehat{\nabla}\bm{w}|^2\, d\bm{x}}{\displaystyle\int\limits_{\Omega}|\widehat{\nabla}\bm{w}|^2\, d\bm{x}}\right)^{\sigma^{-A_1-B_1\log(1/\varrho)}},
	\end{equation}
where $c_1>0$ only depend on $\lambda,\mu$; $\sigma\in(0,1), s>1$ depend on $\lambda,\mu, E_0$ and $c,A_1,B_1>0$ depend on $\lambda,\mu,E_0,r_0,R$.
The main goal is to give a lower estimate of the ratio 
	\begin{equation*}
		\frac{\displaystyle\int\limits_{\Omega_{(s+1)\varrho}}|\widehat{\nabla}\bm{w}|^2\, d\bm{x}}{\displaystyle\int\limits_{\Omega}|\widehat{\nabla}\bm{w}|^2\, d\bm{x}}
	\end{equation*}		
to get the assertion of the theorem.
To this end, we first notice that
\begin{equation}\label{1-ratio I_1 and I_2}
		\frac{\displaystyle\int\limits_{\Omega_{(s+1)\varrho}}|\widehat{\nabla}\bm{w}|^2\, d\bm{x}}{\displaystyle\int\limits_{\Omega}|\widehat{\nabla}\bm{w}|^2\, d\bm{x}}=1-\frac{\displaystyle\int\limits_{\Omega\setminus\Omega_{(s+1)\varrho}}|\widehat{\nabla}\bm{w}|^2\, d\bm{x}}{\displaystyle\int\limits_{\Omega}|\widehat{\nabla}\bm{w}|^2\, d\bm{x}}:=1-\frac{\mathcal{I}_1}{\mathcal{I}_2}.
	\end{equation}
Let us consider the integral $\mathcal{I}_2$. 
Since $\bm{w}=\bm{u}-\overline{\bm{u}}$, see \eqref{definition of the function w}, we use the integral representation formula $\eqref{representation formula}$ for the function $\bm{u}$ and the explicit expression of $\overline{\bm{u}}$ in \eqref{definition of ubar}.
In detail, we consider
	\begin{equation}\label{spherical cap S}
		S=\bigg\{\bm{x}\in B^-_R(\bm{0})\, :\, x_3\leq -\frac{3}{4}R\bigg\}.
	\end{equation} 
see Figure \ref{fig. spherical sector}.
\begin{figure}[h]
\centering
\includegraphics[scale=0.6]{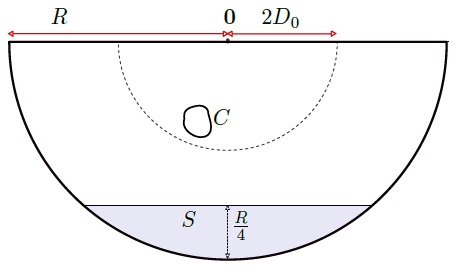}
\caption{The region $S$.}\label{fig. spherical sector}
\end{figure} 	
By a simple calculation we have $|S|=(7/128)\pi R^3$.
 
If $\bm{x}\in\partial C$, $\bm{y}\in S$, then
by \eqref{representation formula} and Theorem \ref{thm:fundsol}, it is easy to see that
	\begin{equation*}
		|\widehat{\nabla}\bm{u}(\bm{y})|\leq |\nabla\bm{u}(\bm{y})|\leq \frac{c p}{R^2},\quad \forall \bm{y}\in S,
	\end{equation*}	
where $c=c(\lambda,\mu,r_0,E_0,D_0)$, hence
	\begin{equation*}
		|\widehat{\nabla}\bm{w}(\bm{y})| \geq |\widehat{\nabla}\overline{\bm{u}}(\bm{y})|-|\widehat{\nabla}\bm{u}(\bm{y})|\geq \frac{p}{3\lambda+2\mu}-\frac{c p}{R^2}\geq \frac{p}{2(3\lambda+2\mu)},
	\end{equation*}
where the last inequality holds choosing $R=\max\{3D_0, (2c^{-1}(3\lambda+2\mu))^{1/2}\}$.
In this way, we have that
	\begin{equation}\label{mathcal I_2}
		\mathcal{I}_2=\int\limits_{\Omega}|\widehat{\nabla}\bm{w}|^2\, d\bm{x}\geq \int\limits_{S}|\widehat{\nabla}\bm{w}|^2\, d\bm{x}\geq c p^2 R^3,
	\end{equation}
where $c=c(\lambda,\mu)$.
	
Now, we estimate the integral $\mathcal{I}_1$ using the regularity result of the Proposition \ref{th regularity estimate}. 
First, we split the integral domain as
\begin{equation*}
\Omega\setminus \Omega_{(s+1)p}=F_1\cup F_2,
\end{equation*}
where
\begin{equation*}
F_1 = \{\bm{x}\in \Omega \, :\, d(\bm{x},C))\leq (s+1)\varrho\},\qquad F_2= \{\bm{x}\in \Omega \, :\, d(\bm{x},\partial B^-_R(\bm{0}))\leq (s+1)\varrho\},
\end{equation*}
see  Figure \ref{fig. bordino}.
\begin{figure}[h!]
\centering
\includegraphics[scale=0.6]{}
\caption{The region $\Omega\setminus \Omega_{(s+1)\varrho}$.}\label{fig. bordino}
\end{figure}
\\From \eqref{more regularity for C}, we notice that 
	\begin{equation}\label{estimate of volume of F1 and F2}
		|F_1\cup F_2|\leq c(r_0,E_0,D_0) \varrho R^2. 
	\end{equation}
%\begin{figure}[h]
%\centering
%\def\svgwidth{220pt}
%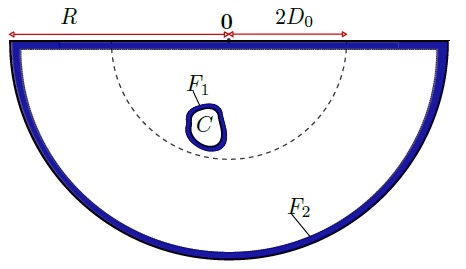
%\end{figure}
Choosing in \eqref{Omega domain inclusions} $\alpha=R/D_0$, where $R=\max\{3D_0, (2c^{-1}(3\lambda+2\mu))^{1/2}\}$, and $\beta=2\alpha$, then 
	\begin{equation*}
		\overline{B^-_{R}(\bm{0})}\subset\subset \overline{Q}\subset\subset \overline{B^-_{2R}(\bm{0})},
	\end{equation*} 
hence we apply the regularity estimate \eqref{eq regularity estimate} for the two regions $F_1$ and $F_2$. 
Now, we have that
	\begin{equation*}
		\|\widehat{\nabla}\bm{w}\|_{L^{\infty}(\Omega\setminus \Omega_{(s+1)\varrho})}\leq \big(\|\widehat{\nabla}\bm{u}\|_{L^{\infty}(\Omega\setminus \Omega_{(s+1)\varrho})}+\|\widehat{\nabla}\overline{\bm{u}}\|_{L^{\infty}(\Omega\setminus \Omega_{(s+1)\varrho})})\leq c p,
	\end{equation*}
where $c=c(\lambda,\mu,r_0,E_0, D_0)$.
Therefore, from \eqref{estimate of volume of F1 and F2}, we find 
	\begin{equation}\label{mathcal I_1}
		\mathcal{I}_1=\int\limits_{\Omega\setminus\Omega_{(s+1)\varrho}}|\widehat{\nabla}\bm{w}|^2\, d\bm{x}\leq cp^2\, |\Omega\setminus\Omega_{(s+1)\varrho}|\leq cp^2 \varrho R^2. 
	\end{equation} 	
Putting together inequalities \eqref{mathcal I_2} and \eqref{mathcal I_1}, there exists $\varrho^*=\varrho^*(\lambda,\mu,r_0,E_0,D_0)>0$, such that for any $\varrho\leq \varrho^*$, we have
	\begin{equation*}
		\frac{\mathcal{I}_1}{\mathcal{I}_2}=\frac{\displaystyle\int\limits_{\Omega\setminus\Omega_{(s+1)\varrho}}|\widehat{\nabla}\bm{w}|^2\, d\bm{x}}{\displaystyle\int\limits_{\Omega}|\widehat{\nabla}\bm{w}|^2\, d\bm{x}}\leq \frac{1}{2}.
	\end{equation*}	
Going back to \eqref{LPS primitive version} and \eqref{1-ratio I_1 and I_2} we have
	\begin{equation*}
		\int\limits_{B_{\varrho}(\overline{\bm{x}})}|\widehat{\nabla}\bm{w}|^2\, d\bm{x}\geq \bigg(c\varrho^3\bigg)^{\sigma^{-A_1-B_1\log(1/\varrho)}}\int\limits_{\Omega}|\widehat{\nabla}\bm{w}|^2\, d\bm{x}, 
	\end{equation*} 
where $c,A_1,B_1$ depend on $\lambda,\mu,E_0,r_0,D_0$, for all $\varrho\leq \varrho^*$.
To conclude, we take $\varrho\leq c$, hence, for every $\varrho\leq \min(c,\varrho^*)$ and noticing that $\log \varrho \geq -1/\varrho$ for $0<\varrho<1$, we get the assertion choosing
	\begin{equation*}
		a=6e^{A_1|\log\sigma|}\qquad\quad \textrm{and}\qquad\quad b=B_1|\log\sigma|+1.
	\end{equation*}  
\end{proof}

We omit the proof of the following two propositions, since they can be obtained using the same strategy adopted in proving Propositions 3.5 and 3.6 in \cite{Morassi-Rosset}.

\begin{proposition}[Stability Estimates of Continuation from Cauchy Data]\label{prop: SECCD}
Under the assumption \eqref{bounds_lame} let $C_1$ and $C_2$ be two domains satisfying \eqref{a priori information on C}, \eqref{first a priori information}, \eqref{second a priori information} and \eqref{more regularity for C}. Moreover, let $\bm{w}_i$, for $i=1,2$, be the solution to \eqref{pb for w} with $C=C_i$. Then, for $\varepsilon<e^{-1}p$, we have
	\begin{equation}\label{stability estimates from cauchy data}
		\begin{aligned}
			&\int\limits_{C_2\setminus \overline{C}_1}|\widehat{\nabla}\bm{w}_1|^2\, d\bm{x} \leq c p^2 \bigg(\log\Big|\log\frac{\varepsilon}{p}\Big|\bigg)^{-1/6},\\
			&\int\limits_{C_1\setminus \overline{C}_2}|\widehat{\nabla}\bm{w}_2|^2\, d\bm{x} \leq c p^2 \bigg(\log\Big|\log\frac{\varepsilon}{p}\Big|\bigg)^{-1/6},
		\end{aligned}
	\end{equation}
where the constant $c=c(\lambda,\mu, r_0,E_0,D_0,s_0)$.
\end{proposition}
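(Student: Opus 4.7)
The plan is to reduce the inequality to a Green-type identity on $C_2\setminus\overline{C}_1$ applied to $\bm{w}_1$, and then to bound the resulting boundary terms by propagating the smallness of the Cauchy data of the difference $\bm{w}:=\bm{w}_1-\bm{w}_2$ via the three spheres inequality (Lemma \ref{3spheres inequality}), along the lines of \cite{Morassi-Rosset,Morassi-Rosset1} to which the statement refers.

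Since $\bm{w}_1$ and $\bm{w}_2$ share the Neumann datum $-p\bm{e}_3$ on $\mathbb{R}^2$ and both are Lam\'e solutions in $G$, the function $\bm{w}$ solves $\mathrm{div}(\mathbb{C}\widehat{\nabla}\bm{w})=\bm{0}$ in $G$ with $(\mathbb{C}\widehat{\nabla}\bm{w})\bm{e}_3=\bm{0}$ on $\mathbb{R}^2$, and the quantity $\varepsilon$ controls the Cauchy data of $\bm{w}$ on $B'_{s_0}(\bm{0})$. Applying Green's identity in $C_2\setminus\overline{C}_1$ to $\bm{w}_1$, the internal wall $\partial C_1$ contributes nothing because $(\mathbb{C}\widehat{\nabla}\bm{w}_1)\bm{n}=\bm{0}$ there, while on the remaining portion $\Sigma\subset\partial C_2\cap\partial G$ one uses the Neumann condition for $\bm{w}_2$ to rewrite $(\mathbb{C}\widehat{\nabla}\bm{w}_1)\bm{n}=(\mathbb{C}\widehat{\nabla}\bm{w})\bm{n}$. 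Combined with the $C^{1,1/2}$ regularity of Proposition \ref{th regularity estimate}, which supplies $\|\bm{w}_1\|_{C^{1,1/2}}\le cp$ on a neighbourhood of $\Sigma$, this reduces the desired estimate to bounding $\|\widehat{\nabla}\bm{w}\|_{L^2(U)}$ on a one-sided tubular neighbourhood $U\subset G$ of $\Sigma$ by the log-log expression on the right-hand side of \eqref{stability estimates from cauchy data}.

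To control $\|\widehat{\nabla}\bm{w}\|_{L^2(U)}$ I would proceed in two stages. First, reflection across $\mathbb{R}^2$ (permitted by the homogeneous Neumann condition there) extends $\bm{w}$ to a Lam\'e solution in a neighbourhood of $B'_{s_0}(\bm{0})$, and a standard Cauchy-stability argument yields smallness of $\bm{w}$ on a hemisphere centred at $\bm{0}$. A chain of three spheres applications (Lemma \ref{3spheres inequality}) inside $G_\rho:=\{\bm{x}\in G\,:\,\mathrm{dist}(\bm{x},\partial G)>\rho\}$ then propagates this smallness, producing a H\"older-type bound $\|\widehat{\nabla}\bm{w}\|_{L^2(B_\rho(\bm{y}))}\le cp\,(\varepsilon/p)^{\tau(\rho)}$ for every $\bm{y}\in G_\rho$. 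For the thin strip $U\setminus G_\rho$, of volume of order $\rho$, the $L^\infty$ bound from Proposition \ref{th regularity estimate} gives the trivial estimate $\|\widehat{\nabla}\bm{w}\|_{L^2(U\setminus G_\rho)}^2\le cp^2\rho$. Balancing the two contributions in $\rho$ would produce single-log stability if a doubling inequality up to $\partial C_2$ were available; for the Lam\'e system the best surrogate is again an iterated three spheres estimate, so $\tau(\rho)$ degenerates exponentially in $\log(1/\rho)$, exactly as in the interior estimate of Proposition \ref{LPS}. Optimizing in $\rho$ with this degenerate exponent turns one logarithm into a double logarithm and produces the rate $\bigl(\log|\log(\varepsilon/p)|\bigr)^{-1/6}$, with the exponent $-1/6$ coming from the interplay between the volume factor $\rho$ and the iterated three spheres exponent, as in \cite{Morassi-Rosset}.

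The principal obstacle is this second stage: keeping careful track of how $\tau(\rho)$ degrades as $\bm{y}$ approaches $\partial C_2$, and ensuring that the optimization yields the exponent $-1/6$ rather than a weaker rate. The symmetric inequality in \eqref{stability estimates from cauchy data} follows by interchanging the roles of $C_1$ and $C_2$ throughout.
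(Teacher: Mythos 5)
Your overall architecture is the right one and is essentially the strategy the paper itself invokes: the paper gives no proof of Proposition \ref{prop: SECCD}, deferring entirely to Propositions 3.5--3.6 of \cite{Morassi-Rosset}, and your plan (Green's identity to trade the energy of $\bm{w}_1$ on $C_2\setminus\overline{C}_1$ for boundary terms in the difference $\bm{w}=\bm{w}_1-\bm{w}_2$, propagation of smallness by iterated three spheres inequalities, a thin-strip estimate from Proposition \ref{th regularity estimate}, and an optimization in $\varrho$ yielding the $\bigl(\log|\log(\varepsilon/p)|\bigr)^{-1/6}$ rate) is a faithful outline of that argument. There are, however, two concrete missteps. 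First, the reflection across $\mathbb{R}^2$ does not work for the Lam\'e system with a traction-free plane: the candidate extension $\widetilde{\bm{w}}(x',x_3)=(w_1,w_2,-w_3)(x',-x_3)$ matches the tangential tractions but is not even continuous across $\{x_3=0\}$ unless $w_3$ vanishes there, and no simple pointwise reflection preserves the system under the full condition $(\mathbb{C}\widehat{\nabla}\bm{w})\bm{e}_3=\bm{0}$ (this is precisely why the half-space Neumann function requires Mindlin's construction rather than an image method). The initial step must instead be a genuine stability estimate for the Cauchy problem near the flat portion $B'_{s_0}(\bm{0})$, as in Proposition 3.4 of \cite{Morassi-Rosset}, which is proved without reflection.

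Second, applying Green's identity directly in $C_2\setminus\overline{C}_1$ leaves you with boundary contributions on the whole of $\partial(C_2\setminus\overline{C}_1)\setminus\partial C_1$, and this set is in general \emph{not} contained in $\partial G$: portions of $\partial C_2$ may face bounded connected components of $\mathbb{R}^3_-\setminus(C_1\cup C_2)$, where neither unique continuation nor the three spheres chain starting from $B'_{s_0}(\bm{0})$ gives any control on $\bm{w}$. The standard remedy (used in \cite{Alessandrini-Beretta-Rosset-Vessella} and \cite{Morassi-Rosset}) is to integrate instead over $(\mathbb{R}^3_-\setminus \overline G)\setminus\overline{C}_1\supset C_2\setminus\overline{C}_1$, whose boundary lies in $\partial G\cup\partial C_1$, so that every surviving boundary term sits on $\partial G$ where smallness can be propagated. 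Relatedly, the passage from the resulting \emph{surface} integral $\int_{\partial G}(\mathbb{C}\widehat{\nabla}\bm{w}\,\bm{n})\cdot\bm{w}_1\,d\sigma$ to an $L^2$ bound on a tubular neighbourhood is not automatic; one must shift to a parallel surface at distance $\varrho$ from $\partial G$ (controlling the discrepancy with the $C^{1,1/2}$ bounds) before the interior three spheres estimates apply. With these corrections your optimization in $\varrho$ does reproduce the claimed exponent, but as written the reflection step and the choice of domain for Green's identity would both fail.
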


The stability estimates in \eqref{stability estimates from cauchy data} can be improved
when $\partial G$ is of Lipschitz class, where $G$ is defined by \eqref{G unbounded component}, as stated in the proposition below.

\begin{proposition}[Improved Stability Estimates of Continuation from Cauchy Data]\label{improved stability estimates of continuation from cauchy data}
Under the assumption \eqref{bounds_lame} let $C_1$ and $C_2$ be two domains satisfying \eqref{a priori information on C}, \eqref{first a priori information}, \eqref{second a priori information} and \eqref{more regularity for C}. In addition, let us assume that there exist $L>0$ and $\widetilde{r}_0$, with $0<\widetilde{r}_0\leq r_0$, such that $\partial G$ is of Lipschitz class with constants $\widetilde{r}_0, L$. Then, we have
	\begin{equation}\label{improved stability estimates from cauchy data}
		\begin{aligned}
			&\int\limits_{C_2\setminus \overline{C}_1}|\widehat{\nabla}\bm{w}_1|^2\, d\bm{x} \leq c p^2 \Big|\log\frac{\varepsilon}{p}\Big|^{-\gamma},\\
			&\int\limits_{C_1\setminus \overline{C}_2}|\widehat{\nabla}\bm{w}_2|^2\, d\bm{x} \leq c p^2 \Big|\log\frac{\varepsilon}{p}\Big|^{-\gamma},
		\end{aligned}
	\end{equation}
where $c,\gamma>0$ depend on $\lambda,\mu, r_0,E_0,D_0,s_0,L,\widetilde{r}_0$.
\end{proposition}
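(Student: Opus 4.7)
My plan is to adapt the stability scheme of \cite{Morassi-Rosset} to the half-space with pressurized cavity, exploiting the Lipschitz regularity of $\partial G$ to upgrade the log-log rate of Proposition \ref{prop: SECCD} to a single logarithm. The starting observation is that on $C_2\setminus\overline{C}_1$ the function $\bm{w}_1$ is a classical Lam\'e solution and, by Proposition \ref{th regularity estimate}, $|\widehat{\nabla}\bm{w}_1|\le cp$ pointwise; hence it suffices to produce a smallness estimate of order $|\log(\varepsilon/p)|^{-\gamma}$ for $\int|\widehat{\nabla}\bm{w}_1|^2$ on a tubular neighborhood of $\partial C_1\cap\partial G$ contained in $G$, and then integrate. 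The symmetric argument with $\bm{w}_2$ and the roles of the cavities exchanged gives the second inequality.

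First I would propagate smallness from the Cauchy data on $B'_{s_0}(\bm{0})$ into the interior of $G$. The Cauchy pair is $\{\bm{w},-p\bm{e}_3\}$ with $\|\bm{w}_1-\bm{w}_2\|\le\varepsilon$ in a suitable norm on $B'_{s_0}(\bm{0})$; subtracting a harmless smooth corrector matching the traction $-p\bm{e}_3$ on a neighbourhood of the origin, I reduce to a solution with $\varepsilon$-small Cauchy data on an open portion of $\{x_3=0\}$. Iterating the three-spheres inequality (Lemma \ref{3spheres inequality}) along a finite chain of overlapping balls contained in $G_{d}$ (with $d$ depending only on the a priori data) gives, for every ball $B_\varrho(\bar{\bm{x}})$ well inside $G$, an estimate
\begin{equation*}
\int_{B_\varrho(\bar{\bm{x}})}|\widehat{\nabla}(\bm{w}_1-\bm{w}_2)|^2\,d\bm{x}\le cp^2\Big(\tfrac{\varepsilon}{p}\Big)^{\delta^N},
\end{equation*}
with $\delta\in(0,1)$ and $N$ depending only on $\lambda,\mu,r_0,E_0,D_0,s_0$.

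Second, and crucially, I propagate this interior smallness up to $\partial C_1\cap\partial G$ using the Lipschitz cone condition. At every $\bm{y}\in\partial C_1\cap\partial G$ the hypothesis on $\partial G$ with constants $\widetilde r_0,L$ provides a non-tangential open cone $V_{\bm{y}}\subset G$ of aperture depending only on $L$. Inside $V_{\bm{y}}$ I place a sequence $B_{\varrho_k}(\bm{y}_k)$ with $\varrho_k=\chi^k\varrho_0$ and $|\bm{y}_k-\bm{y}|\sim\varrho_k$ for a fixed $\chi\in(0,1)$ depending on $L$, and apply the three-spheres inequality along consecutive triples, initialized by the interior estimate. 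After $k$ steps this yields $\int_{B_{\varrho_k}(\bm{y}_k)}|\widehat{\nabla}\bm{w}_1|^2\le cp^2(\varepsilon/p)^{\delta^{N+k}}$. Choosing $k\sim\log(1/\eta)$ so that $\varrho_k\sim\eta$, integrating over a tubular $\eta$-neighborhood of $\partial C_1\cap\partial G$ in $G$ (of volume $O(\eta)$ thanks to the Lipschitz character), using the $L^\infty$ bound $|\widehat{\nabla}\bm{w}_1|\le cp$ on the complementary region, and optimizing over $\eta$, the exponent $\delta^{N+k}$ translates into a power of $|\log(\varepsilon/p)|$, producing the claimed bound \eqref{improved stability estimates from cauchy data}.

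The main obstacle is verifying that the Lipschitz cone condition on $\partial G$ with the uniform constants $\widetilde r_0,L$ is indeed exploitable to avoid the doubly exponential iteration that, without this regularity, is forced in Proposition \ref{prop: SECCD} by possible cuspidal geometry along $\overline{C}_1\cap\overline{C}_2$: the chain must stay inside $G$, each successive ball $B_{\varrho_k}(\bm{y}_k)$ must satisfy the geometric conditions of Lemma \ref{3spheres inequality} with constants uniform in $k$, and the aperture must not degenerate as $\bm{y}$ varies over $\partial C_1\cap\partial G$. The remaining work is bookkeeping: tracking the dependence of $\delta,c,N$ on $\lambda,\mu,r_0,E_0,D_0,s_0,L,\widetilde r_0$, and checking that the optimization in $\eta$ converts the iterated exponent into a genuine power of $|\log(\varepsilon/p)|$.
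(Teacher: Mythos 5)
The paper does not actually write out a proof of this proposition: it defers to the strategy of Propositions 3.5 and 3.6 of \cite{Morassi-Rosset}. Your two-step mechanism --- a chain of three-spheres inequalities propagating the smallness of the Cauchy data into $G$, followed by a geometric sequence of balls inside the non-tangential cones provided by the Lipschitz regularity of $\partial G$, which turns the doubly exponential loss of Proposition \ref{prop: SECCD} into a single exponential and hence a single logarithm after optimizing the width $\eta$ of the tubular region --- is precisely that strategy, so at the level of the key idea you are aligned with the intended proof. Two small remarks: the corrector you introduce to normalize the traction is unnecessary, since $\bm{w}_1-\bm{w}_2=\bm{u}_1-\bm{u}_2$ already has vanishing traction on $B'_{s_0}(\bm{0})$ and $L^2$-norm at most $\varepsilon$ there, so you can work directly with the difference.

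There is, however, a genuine gap in your reduction step. The integral to be estimated is over $C_2\setminus \overline{C}_1$, which lies \emph{inside} the cavity $C_2$ and is therefore disjoint from $G$; smallness of $\widehat{\nabla}\bm{w}_1$ (or of $\widehat{\nabla}(\bm{w}_1-\bm{w}_2)$) on a tubular neighbourhood of $\partial C_1\cap\partial G$ contained in $G$, combined with the $L^\infty$ bound $|\widehat{\nabla}\bm{w}_1|\le cp$, does not bound an integral over a region you never touch. The missing ingredient is the Betti/divergence identity in $C_2\setminus\overline{C}_1$,
\begin{equation*}
\int\limits_{C_2\setminus\overline{C}_1}\mathbb{C}\widehat{\nabla}\bm{w}_1:\widehat{\nabla}\bm{w}_1\,d\bm{x}
=\int\limits_{\partial(C_2\setminus\overline{C}_1)}\bigl(\mathbb{C}\widehat{\nabla}\bm{w}_1\,\bm{n}\bigr)\cdot\bm{w}_1\,d\sigma ,
\end{equation*}
in which the contribution of $\partial C_1$ vanishes because $(\mathbb{C}\widehat{\nabla}\bm{w}_1)\bm{n}=\bm{0}$ there, and the remaining boundary portion lies on $\partial C_2$, where one writes $(\mathbb{C}\widehat{\nabla}\bm{w}_1)\bm{n}=(\mathbb{C}\widehat{\nabla}(\bm{w}_1-\bm{w}_2))\bm{n}$ using the homogeneous traction of $\bm{w}_2$ on $\partial C_2$. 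It is this boundary term on $\partial C_2\cap\partial G$ --- not a neighbourhood of $\partial C_1\cap\partial G$, which is the wrong interface --- that is controlled by the smallness propagated through $G$ up to $\partial G$ via the cone construction (together with interior gradient estimates to pass from $L^2$ smallness on balls to pointwise control near the boundary). One must also account for possible bounded connected components of $\mathbb{R}^3_-\setminus\overline{C_1\cup C_2}$ other than $G$, into which no smallness propagates and whose boundary contributions have to be eliminated by the same traction-free conditions. Without this step your argument does not reach the stated integrals, even though the propagation machinery you describe is the right one.
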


Now, we have all the preliminary results to prove the stability theorem.
\begin{theorem}[Stability Estimate]\label{th: stability estimate}
Under the assumption \eqref{bounds_lame} let $C_1$ and $C_2$ be two domains satisfying \eqref{a priori information on C}, \eqref{first a priori information}, \eqref{second a priori information} and \eqref{more regularity for C}. Moreover, let $\bm{u}_i$, for $i=1,2$, be the solution to \eqref{direct pb} with $C=C_i$. If, given $\varepsilon>0$, we have
	\begin{equation}
		\|\bm{u}_1-\bm{u}_2\|_{L^2(B'_{s_0}(\bm{0}))}\leq \varepsilon,
	\end{equation}
then it holds
	\begin{equation}
		d_{\mathcal H}(\partial C_1,\partial C_2)\leq c \bigg(\log\Big|\log \frac{\varepsilon}{p}\Big|\bigg)^{-\eta},
	\end{equation}
for every $\varepsilon<e^{-1}p$, where the constants $c$ and $\eta$, with $0<\eta\leq 1$, depend on $\lambda,\mu,r_0,E_0,D_0$ and $s_0$.		
\end{theorem}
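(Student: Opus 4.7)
The plan is to quantify the uniqueness argument of Theorem \ref{th of uniqueness} by chaining the propagation of smallness from the boundary (Proposition \ref{improved stability estimates of continuation from cauchy data}) with the Lipschitz propagation of smallness in the interior (Proposition \ref{LPS}), in the spirit of Morassi--Rosset \cite{Morassi-Rosset,Morassi-Rosset1}. The reduction $\bm{w}_i:=\bm{u}_i-\overline{\bm{u}}$ from \eqref{definition of the function w} is essential: by \eqref{w_1-w_2=u_1-u_2} one has $\bm{w}_1-\bm{w}_2=\bm{u}_1-\bm{u}_2$ outside both cavities, and the tractions $(\mathbb{C}\widehat{\nabla}\bm{w}_i)\bm{e}_3=-p\bm{e}_3$ on $\mathbb{R}^2$ agree, so the Cauchy data of $\bm{w}_1-\bm{w}_2$ on $B'_{s_0}(\bm{0})$ are controlled by $\varepsilon$ and $0$ respectively, as required by Proposition \ref{improved stability estimates of continuation from cauchy data}.

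Set $d:=d_{\mathcal{H}}(\partial C_1,\partial C_2)$ and, after possibly swapping the indices, choose $\bm{y}_0\in\partial C_2$ with $d(\bm{y}_0,\partial C_1)\geq d$. Since $\partial C_2$ is of class $C^3$ with constants $r_0,E_0$, an interior ball argument produces $\bar{\bm{x}}$ and $\varrho\geq c\min(d,r_0)$, with $c=c(E_0)$, such that $B_\varrho(\bar{\bm{x}})\subset C_2\setminus\overline{C}_1$ and $\bar{\bm{x}}\in (B^-_R(\bm{0})\setminus C_1)_{s\varrho}$ with $R,s$ as in Proposition \ref{LPS}. Applying Proposition \ref{LPS} to $\bm{w}_1$, which satisfies the homogeneous Neumann condition on $\partial C_1$, yields
\begin{equation*}
\int_{B_\varrho(\bar{\bm{x}})}|\widehat{\nabla}\bm{w}_1|^2\,d\bm{x}\;\geq\;\frac{c}{e^{a\varrho^{-b}}}\int_{B^-_R(\bm{0})\setminus C_1}|\widehat{\nabla}\bm{w}_1|^2\,d\bm{x}.
\end{equation*}
The right-hand integral is bounded below by $cp^2$: this lower bound is exactly the one derived inside the proof of Proposition \ref{LPS}, by comparing $\widehat{\nabla}\bm{w}_1$ with $\widehat{\nabla}\overline{\bm{u}}$ on the sector $S$ defined in \eqref{spherical cap S}, where $|\widehat{\nabla}\bm{u}_1|$ is small thanks to \eqref{representation formula} and Theorem \ref{thm:fundsol}.

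Next, since $B_\varrho(\bar{\bm{x}})\subset C_2\setminus\overline{C}_1$, Proposition \ref{improved stability estimates of continuation from cauchy data} delivers the upper bound
\begin{equation*}
\int_{B_\varrho(\bar{\bm{x}})}|\widehat{\nabla}\bm{w}_1|^2\,d\bm{x}\;\leq\;\int_{C_2\setminus\overline{C}_1}|\widehat{\nabla}\bm{w}_1|^2\,d\bm{x}\;\leq\;cp^2\Bigl|\log\tfrac{\varepsilon}{p}\Bigr|^{-\gamma}.
\end{equation*}
Chaining the two estimates gives $e^{a\varrho^{-b}}\geq c|\log(\varepsilon/p)|^{\gamma}$, and taking logarithms twice, $\varrho\leq c(\log|\log(\varepsilon/p)|)^{-1/b}$. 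Since $d\leq C\varrho$ whenever $d\leq r_0$ (and otherwise the conclusion is trivial after shrinking $\varepsilon$), the announced log-log estimate follows with $\eta=1/b$.

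The main obstacle is that Proposition \ref{improved stability estimates of continuation from cauchy data} requires the boundary of the unbounded component $G$ to be of Lipschitz class with constants $\widetilde{r}_0,L$. In the present statement these are incorporated into the a priori data (as reflected in the list of constants on which $c,\eta$ depend); if one wishes to deduce this Lipschitz character intrinsically, a two-step bootstrap is needed. First, the weaker Proposition \ref{prop: SECCD} gives a crude $(\log\log|\log(\varepsilon/p)|)^{-\kappa}$ control on $d$, which, once $d$ is below a threshold governed by $r_0,E_0,D_0$, forces $\partial G$ to inherit uniform Lipschitz bounds from the $C^3$ regularity of $\partial C_1,\partial C_2$ via a geometric lemma analogous to the one in \cite{Morassi-Rosset1}. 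Only after this step can the improved Proposition \ref{improved stability estimates of continuation from cauchy data} be invoked to obtain the sharper log-log rate above. This geometric reduction is the most delicate technical point of the argument.
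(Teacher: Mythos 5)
Your proposal is correct and follows essentially the same route as the paper: reduce to $\bm{w}_i$, place a ball of radius comparable to $d_{\mathcal H}$ in one of the difference sets $C_2\setminus\overline{C}_1$ or $C_1\setminus\overline{C}_2$, chain the lower bound from Proposition \ref{LPS} (with the $cp^2$ bound on the sector $S$) against the upper bound from the continuation-from-Cauchy-data estimates, and use the two-step bootstrap (crude rate from Proposition \ref{prop: SECCD}, then the geometric lemma forcing $\partial G$ Lipschitz, then Proposition \ref{improved stability estimates of continuation from cauchy data}) exactly as in the paper. The only slight imprecision is that swapping indices alone does not guarantee the ball lands in $C_2\setminus\overline{C}_1$; as in the paper one must also distinguish whether $B_d$ at the extremal boundary point lies inside or outside the other cavity and use $\bm{w}_1$ or $\bm{w}_2$ accordingly, but both required integral estimates are available so this is harmless.
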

\begin{proof}
Since \eqref{w_1-w_2=u_1-u_2} holds, we prove the assertion using the function $\bm{w}_i$, for $i=1,2$. 
In this way we can apply the same proof strategy contained in \cite{Morassi-Rosset1}. In the sequel we simply denote with $d_{\mathcal{H}}$ the Hausdorff distance $d_{\mathcal{H}}(\partial C_1,\partial C_2)$.
 
Let us prove that if $\eta>0$ is such that
	\begin{equation}\label{integral grad w less of eta}
		\int\limits_{C_2\setminus \overline{C}_1}|\widehat{\nabla}\bm{w}_1|^2\, d\bm{x}\leq \eta,\quad\qquad
		\int\limits_{C_1\setminus \overline{C}_2}|\widehat{\nabla}\bm{w}_2|^2\, d\bm{x}\leq \eta,
	\end{equation}
then we have
	\begin{equation}\label{distance d in the stability estimate}
		d_{\mathcal H} \leq c \bigg(\log\frac{cp^2}{\eta}\bigg)^{-1/b},
	\end{equation} 
where $b,c$ depend on $\lambda,\mu,r_0,E_0,D_0$.\\
We may assume, with no loss of generality, that there exists $\bm{x}_0\in\partial C_1$ such that
	\begin{equation*}
		\textrm{dist}(\bm{x}_0,\partial C_2)=d_{\mathcal H}.
	\end{equation*}
In this setting, we have to distinguish two cases:
	\begin{enumerate}[(i)]
		\item $B_{d_{\mathcal H}}(\bm{x}_0)\subset C_2$;\label{first inclusion}
		\item $B_{d_{\mathcal H}}(\bm{x}_0)\cap C_2=\emptyset$. \label{intersection}
	\end{enumerate}
Let us consider case \eqref{first inclusion}.
By the regularity assumption made on $\partial C_1$, see \eqref{more regularity for C}, there exists $\bm{x}_1\in C_2\setminus \overline{C}_1$ such that 
	\begin{equation*}
		B_{tdd_{\mathcal H}}(\bm{x}_1)\subset (C_2\setminus \overline{C}_1),\, \textrm{with}\, \,\, t=\frac{1}{1+\sqrt{1+E^2_0}}.
	\end{equation*}
By the first inequality in \eqref{integral grad w less of eta}, taking $\varrho=tdd_{\mathcal H}/s$
in Proposition \ref{LPS}, we have
	\begin{equation}\label{inequality for eta}
		\eta\geq \int\limits_{C_2\setminus \overline{C}_1}|\widehat{\nabla}\bm{w}_1|^2\, d\bm{x}\geq \int\limits_{B_{\varrho}(\bm{x}_1)}|\widehat{\nabla}\bm{w}_1|^2\, d\bm{x}\geq \frac{c}{e^{a\varrho^{-b}}}\int\limits_{B^-_R\setminus \overline{C}_1}|\widehat{\nabla}\bm{w}_1|^2\, d\bm{x},
	\end{equation} 
where we recall that $R=R(\lambda,\mu,r_0,E_0,D_0)$. By \eqref{mathcal I_2}, we find that
	\begin{equation*}
		\int\limits_{B^-_R\setminus \overline{C}_1}|\widehat{\nabla}\bm{w}_1|^2\, d\bm{x}\geq c p^2,
	\end{equation*} 
so that, going back to \eqref{inequality for eta}, we have
	\begin{equation}
		\eta\geq \frac{c p^2}{\displaystyle e^{a\varrho^{-b}}}=\frac{c p^2}{\displaystyle e^{a(td_{\mathcal H}/s)^{-b}}}.
	\end{equation}	 
From this inequality it is straightforward to find \eqref{distance d in the stability estimate}.

Case \eqref{intersection} can be proved in a similar way by substituting $\bm{w}_1$ with $\bm{w}_2$ in the previous calculations and employing the second inequality in \eqref{integral grad w less of eta}.

Now, applying \eqref{stability estimates from cauchy data}, that is taking
	\begin{equation*}
		\eta=c p^2 \bigg(\log\Big|\log\frac{\varepsilon}{p}\Big|\bigg)^{-1/6},
	\end{equation*}
we obtain from \eqref{distance d in the stability estimate} that
	\begin{equation} \label{3log}
		d_{\mathcal H}\leq c\bigg(\log\log\Big|\log\frac{\varepsilon}{p}\Big|\bigg)^{-1/b},
	\end{equation}
where we require $\varepsilon< e^{-e}p$ to have a positive quantity in right side of the previous inequality; the positive constants $b,c$ depend on $\lambda,\mu,r_0,E_0,s_0$ and $D_0$.

Next, to improve the modulus of continuity of this estimate we recall a geometrical result, first introduced and proved in \cite{Alessandrini-Beretta-Rosset-Vessella}, ensuring that there exists $d_0>0$, $d_0=d_0(r_0,E_0)$ such that if $d_{\mathcal H}(\partial C_1,\partial C_2)\leq d_0$, then the boundary of $G$ is of Lipschitz class with constants $\widetilde{r}_0$, $L$, only depending on $r_0$ and $E_0$. By \eqref{3log}, there exists $\varepsilon_0>0$ only depending on $\lambda,\mu,r_0,E_0,s_0$ and $D_0$ such that if $\varepsilon\leq \varepsilon_0$ then $d_{\mathcal H}\leq d_0$. 				
In this way $G$ satisfies the hypotheses of Proposition \ref{improved stability estimates of continuation from cauchy data} hence the assertion follows. 
\end{proof}
% % % % % % % % % % % % % % % % % % % % % % % % % % % % % % % % % % % % % % % % % % % % % % % % % % % % % % % % % % % % % % % % % % % % % % % % % % % % % % % % % % % % % % % % % % % % % % % % % % % % % % % % % % % % % % % % % % % % % % % % % % % % % % % % % % % % % % % % % % % %

\appendix
\appendixtitleon
\begin{appendices}
\section{Neumann Function for the Lam\'e operator in the Half-Space}
This appendix is devoted to the explicit expression of the Neumann function for the Lam\'e operator in the half-space presented in \cite{Mindlin36,Mindlin54}. Before doing that we recall the fundamental solution $\mathbf{\Gamma}$ of the Lam\'e operator, that is the so called Kelvin-Somigliana matrix. $\mathbf{\Gamma}$ is the solution to the equation
\begin{equation*}
	\textrm{div}(\mathbb{C}\widehat{\nabla}\bm{\Gamma})=\delta_{\bm{0}}\mathbf{I},\qquad
	\bm{x}\in\mathbb{R}^3\setminus\{\bm{0}\}, 
\end{equation*}
where $\delta_{\bm{0}}$ is the Dirac function centred at $\bm{0}$ and $\mathbf{I}$ is the identity matrix.
Setting $C_{\mu,\nu}:={1}/\{16\pi\mu(1-\nu)\}$, where $\nu$ is the Poisson ratio $\nu=\lambda/(2(\lambda+\mu))$, the explicit expression of $\mathbf{\Gamma}=(\Gamma_{ij})$ is
\begin{equation}\label{Gamma}
	\Gamma_{ij}(\bm{x})=-C_{\mu,\nu}\biggl\{\frac{(3-4\nu)\delta_{ij}}{|\bm{x}|}+\frac{x_i x_j}{|\bm{x}|^3}\biggr\},
	\qquad i,j=1,2,3,
\end{equation}
where $\delta_{ij}$ is the Kronecker symbol.

Given $\bm{y}=(y_1,y_2,y_3)$, we set $\widetilde{\bm{y}}=(y_1,y_2,-y_3)$. Now, we have  
\begin{theorem}[\cite{Aspri-Beretta-Mascia}]\label{thm:fundsol}
The Neumann function $\mathbf{N}$ of problem \eqref{Neumann function} can be decomposed as
\begin{equation*}
	\mathbf{N}(\bm{x},\bm{y})=\mathbf{\Gamma}(\bm{x}-\bm{y})+\mathbf{R}^1(\bm{x}-\widetilde{\bm{y}})
		+y_3\mathbf{R}^2(\bm{x}-\widetilde{\bm{y}})+y_3^2\,\mathbf{R}^3(\bm{x}-\widetilde{\bm{y}}),
\end{equation*}
where $\mathbf{\Gamma}$ is the Kelvin matrix, see \eqref{Gamma},
and $\mathbf{R}^k$, $k=1,2,3$, have components $R^k_{ij}$ given by
\begin{equation*}
	\begin{aligned}
	R^1_{ij}(\bm{\eta})&:=C_{\mu,\nu}\bigl\{-(\tilde f+c_\nu\tilde g)\delta_{ij}-(3-4\nu)\eta_i\eta_j\tilde f^3\\
	&\hskip2.75cm +c_\nu\bigl[\delta_{i3}\eta_j-\delta_{j3}(1-\delta_{i3})\eta_i\bigr]\tilde f\tilde g
		+c_\nu(1-\delta_{i3})(1-\delta_{j3})\eta_i \eta_j\tilde f\tilde g^2\bigr\}\\
	R^2_{ij}(\bm{\eta})&:=2C_{\mu,\nu}\bigl\{(3-4\nu)\bigl[\delta_{i3}(1-\delta_{j3})\eta_j+\delta_{j3}(1-\delta_{i3})\eta_i\bigr]\tilde f^3
		-(1-2\delta_{3j})\delta_{ij}\eta_3\tilde f^3\\
	&\hskip8.75cm +3(1-2\delta_{3j})\eta_i\eta_j\eta_3\tilde f^5\bigr\}\\
	R^3_{ij}(\bm{\eta})&:=2C_{\mu,\nu}(1-2\delta_{j3})\bigl\{\delta_{ij} \tilde f^3-3\eta_i\eta_j\tilde f^5\bigr\}.
	\end{aligned}
\end{equation*}
for $i,j=1,2,3$, where $c_\nu:=4(1-\nu)(1-2\nu)$ and
\begin{equation*}
	\tilde f(\bm{\eta}):=\frac{1}{|\bm{\eta}|},\qquad
	\tilde g(\bm{\eta}):=\frac{1}{|\bm{\eta}|-\eta_3}.
\end{equation*}
\end{theorem}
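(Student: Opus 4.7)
The plan is to establish the decomposition by direct verification: define $\mathbf{N}(\bm{x},\bm{y})$ via the right-hand side of the claimed formula and check that it satisfies, in the sense of distributions, the three defining properties of the Neumann function in \eqref{Neumann function}, namely (i) $\mathrm{div}(\mathbb{C}\widehat{\nabla}_{\bm{x}}\mathbf{N}(\cdot,\bm{y})) = \delta_{\bm{y}}\mathbf{I}$ in $\mathbb{R}^3_-$, (ii) $(\mathbb{C}\widehat{\nabla}_{\bm{x}}\mathbf{N}(\cdot,\bm{y}))\bm{e}_3 = 0$ on $\{x_3=0\}$, and (iii) the decay $|\mathbf{N}| = O(|\bm{x}|^{-1})$, $|\nabla \mathbf{N}| = O(|\bm{x}|^{-2})$ as $|\bm{x}|\to\infty$. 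The philosophy is a Lam\'e analogue of the method of images: since the Kelvin--Somigliana matrix $\mathbf{\Gamma}(\bm{x}-\bm{y})$ alone already produces the correct Dirac source at $\bm{y}$ but leaves a nonzero traction on $\{x_3=0\}$, the corrector $\mathbf{R}^1(\bm{x}-\widetilde{\bm{y}}) + y_3 \mathbf{R}^2(\bm{x}-\widetilde{\bm{y}}) + y_3^2 \mathbf{R}^3(\bm{x}-\widetilde{\bm{y}})$ must cancel exactly that traction while remaining Lam\'e-harmonic in $\mathbb{R}^3_-$.

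The steps I would carry out in order are the following. First, I note that the image point $\widetilde{\bm{y}}=(y_1,y_2,-y_3)$ lies in $\mathbb{R}^3_+$ whenever $\bm{y}\in\mathbb{R}^3_-$, so each $\mathbf{R}^k(\bm{x}-\widetilde{\bm{y}})$ is smooth in $\bm{x}\in\overline{\mathbb{R}^3_-}$, and consequently the only singularity of $\mathbf{N}$ in $\mathbb{R}^3_-$ is the one already provided by $\mathbf{\Gamma}(\bm{x}-\bm{y})$; this yields (i) once one checks that each $\mathbf{R}^k(\cdot)$, seen as a function of its argument, satisfies the homogeneous Lam\'e system away from the origin (a differentiation check using the explicit formulas in terms of $\tilde f=|\bm{\eta}|^{-1}$ and $\tilde g=(|\bm{\eta}|-\eta_3)^{-1}$). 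Second, for the boundary condition (ii) I exploit the key simplification that on $\{x_3=0\}$ one has $\bm{x}-\bm{y}=(x_1-y_1,x_2-y_2,-y_3)$ and $\bm{x}-\widetilde{\bm{y}}=(x_1-y_1,x_2-y_2,y_3)$, so the two kernels share the same modulus $|\bm{x}-\bm{y}|=|\bm{x}-\widetilde{\bm{y}}|$ while the third components are opposite. One then computes $(\mathbb{C}\widehat{\nabla}_{\bm{x}}\mathbf{N})\bm{e}_3 = \lambda(\mathrm{div}_{\bm{x}}\mathbf{N})\bm{e}_3 + \mu(\partial_3\mathbf{N}+ (\nabla_{\bm{x}}\mathbf{N})^T\bm{e}_3)$ column by column at $x_3=0$ and separates tangential ($i=1,2$) and normal ($i=3$) components; the symmetry between $\bm{x}-\bm{y}$ and $\bm{x}-\widetilde{\bm{y}}$ on the boundary forces the $\mathbf{\Gamma}$-contribution plus the $\mathbf{R}^1$-contribution to kill the purely geometric tractions, while the linear-in-$y_3$ and quadratic-in-$y_3$ terms generated by $\mathbf{R}^2$ and $\mathbf{R}^3$ are precisely what is needed to annihilate the residual normal-traction terms carrying an explicit $y_3$ factor. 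Third, property (iii) follows at once because $\tilde f$ and $\tilde g$ are homogeneous of degree $-1$, so each $\mathbf{R}^k(\bm{x}-\widetilde{\bm{y}})$ decays like $|\bm{x}|^{-1}$ and its gradient like $|\bm{x}|^{-2}$ for $|\bm{x}|$ large, uniformly in $\bm{y}$ in compact sets of $\mathbb{R}^3_-$. Fourth, uniqueness of the Neumann function under (iii) is proved by the standard energy argument: the difference of two candidates is a Lam\'e-harmonic field in $\mathbb{R}^3_-$ with vanishing traction on $\{x_3=0\}$ and the prescribed decay, and an integration by parts on $B^-_R(\bm{0})$ combined with Korn together with the decay of the boundary integrals over $\partial B^b_R(\bm{0})$ as $R\to\infty$ forces it to be an infinitesimal rigid motion, which must vanish by the decay.

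The main obstacle is unquestionably the explicit verification of (ii). The algebraic structure of $\mathbf{R}^1$, $\mathbf{R}^2$, $\mathbf{R}^3$ is engineered so that many cancellations happen only after grouping terms by their dependence on $\tilde f$, $\tilde g$, $\eta_3$, and the Poisson ratio combination $c_\nu = 4(1-\nu)(1-2\nu)$; in particular the Kronecker symbols $\delta_{i3}$, $\delta_{j3}$ and factors $(1-2\delta_{3j})$ control the parity of each term under the reflection $\eta_3\mapsto -\eta_3$, which is exactly what drives the boundary cancellation. A cleaner alternative that I would fall back on if the direct calculation becomes unwieldy is to derive the formula by taking a partial Fourier transform in the tangential variables $(x_1,x_2)$, solving the resulting system of linear ODEs in $x_3$ with the traction-free condition at $x_3=0$ and decay at $x_3\to-\infty$, and then inverting the transform; this is the route originally followed by Mindlin and it automatically produces the three-piece decomposition with the right $y_3$, $y_3^2$ weights, after which one recognizes the inverse transforms as the explicit expressions in $\tilde f$ and $\tilde g$ given in the statement.
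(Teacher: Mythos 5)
The paper does not actually prove this statement: Theorem \ref{thm:fundsol} is imported verbatim from \cite{Aspri-Beretta-Mascia}, which in turn rests on Mindlin's classical point-force solution for the semi-infinite solid \cite{Mindlin36,Mindlin54}. So there is no in-paper argument to compare against; what can be assessed is whether your plan would constitute a valid proof, and in substance it would. Your scheme --- treat the right-hand side as an ansatz, observe that $\widetilde{\bm{y}}\in\mathbb{R}^3_+$ so the correctors are smooth on $\overline{\mathbb{R}}^3_-$ and the Dirac source comes only from $\mathbf{\Gamma}$, verify the traction-free condition on $\{x_3=0\}$ using $|\bm{x}-\bm{y}|=|\bm{x}-\widetilde{\bm{y}}|$ there, read off the decay from homogeneity, and close with the standard energy/Korn uniqueness argument --- is exactly the method-of-images verification one would write down, and your fallback via partial Fourier transform in $(x_1,x_2)$ is essentially the route by which such formulas are derived in the first place. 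Two caveats. First, essentially all of the content lies in the deferred traction computation on $\{x_3=0\}$; as written, the proposal is a plan rather than a proof, and the ``parity under $\eta_3\mapsto-\eta_3$'' heuristic, while correctly identified as the mechanism, does not by itself certify the cancellation. Second, the claim that each $\mathbf{R}^k$ separately satisfies the homogeneous Lam\'e system needs justification rather than assertion: it is true, and the clean way to see it is to recognize each entry as a superposition of classical nuclei of strain, e.g.\ $R^3_{ij}=-2C_{\mu,\nu}(1-2\delta_{j3})\,\partial_i\partial_j\tilde f$ is a gradient of a harmonic function, and the $\tilde f\tilde g$, $\tilde f\tilde g^2$ terms are derivatives of the logarithmic potential $\log(|\bm{\eta}|-\eta_3)$ (note $\partial_i\log(|\bm{\eta}|-\eta_3)=\eta_i\tilde f\tilde g$ for $i=1,2$ and $\partial_3\log(|\bm{\eta}|-\eta_3)=-\tilde f$), whose singular ray $\{\eta_1=\eta_2=0,\ \eta_3\ge 0\}$ is avoided precisely because $\eta_3=x_3+y_3<0$, as you correctly observe. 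With those two points filled in, the argument is complete and consistent with the cited source.
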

\end{appendices}

\section*{Acknowledgements}
The authors thank Prof. Cherif Amrouche for his kindness in suggesting and providing some useful papers and for his enlightening advice.
Andrea Aspri and Elena Beretta thank the New York University in Abu Dhabi (EAU) for its kind hospitality that permitted a further 
development of the present research. Andrea Aspri thanks \"{O}AW (Austrian Academy of Sciences) and RICAM for giving him the possibility to finish this paper.
Edi Rosset is supported by FRA2016 "Problemi inversi, dalla stabilit\`a alla ricostruzione", Universit\`a degli Studi di Trieste and by Progetto GNAMPA 2017 "Analisi di problemi inversi: stabilit\`a e ricostruzione", Istituto Nazionale di Alta Matematica (INdAM).

\section*{References}

\end{document}